\newtheorem{thm}{Theorem}
\newtheorem{lem}[thm]{Lemma}
\newtheorem{prop}[thm]{Proposition}
\newtheorem{conj}[thm]{Conjecture}
\theoremstyle{definition}
\newtheorem{defn}[thm]{Definition}
\newtheorem{say}[thm]{}
\newtheorem{exmp}[thm]{Example}
\newtheorem{ques}[thm]{Question}    
\newtheorem{rem}[thm]{Remark}          
\newtheorem*{ack}{Acknowledgments}      
\newtheorem{defn-thm}[thm]{Definition--Theorem}  
\newtheorem{defn-lem}[thm]{Definition--Lemma}  
\theoremstyle{remark}
\let \cedilla =\c
\renewcommand{\c}[0]{{\mathbb C}}
\newcommand{\z}[0]{{\mathbb Z}}
\renewcommand{\r}[0]{{\mathbb R}}
\newcommand{\p}[0]{{\mathbb P}}
\newcommand{\bd}[0]{{\mathbb D}}
\newcommand{\qtq}[1]{\quad\mbox{#1}\quad}
\newcommand{\im}[0]{\operatorname{im}}
\newcommand{\chow}[0]{\operatorname{Chow}}
\newcommand{\onto}[0]{\twoheadrightarrow}
\newcommand{\univ}[0]{\operatorname{Univ}}
\newcommand{\alb}[0]{\operatorname{alb}} 
\newcommand{\Alb}[0]{\operatorname{Alb}}
\def\into{\DOTSB\lhook\joinrel\to}
\begin{document}
\bibliographystyle{amsalpha}


\title[Algebraic varieties with semialgebraic universal cover]{Algebraic 
varieties with\\ semialgebraic universal cover}
\author{J\'anos Koll\'ar}
\author{John Pardon}

\maketitle

The study of algebraic varieties using their universal cover
goes back to Euler (for elliptic curves), Abel (for Abelian varieties)
and Poincar\'e (for curves of genus $\geq 2$). 
A  general  approach was initiated by
 Shafarevich \cite[Sec.IX.4]{shaf};
see \cite{shaf-book, MR1492537, BK98, ekpr} and the references there
for later results and surveys.

In the classical examples,  the universal cover is rather simple
($\c^n$ or a bounded symmetric domain), which makes it possible 
to get  detailed information about a variety using its universal cover.
This leads  to our basic question:
$$
\mbox{\it Which smooth projective varieties have ``simple'' universal cover?}
$$
There are at least three  ways to define what ``simple'' should mean,
depending on which properties of the  universal cover $\tilde X$
we focus on.
\begin{enumerate}
\item[$\bullet$] (Topology)  $\tilde X$ is 
homotopic to a finite CW complex.
\item[$\bullet$] (Complex analysis) $\tilde X$ is a bounded domain in a
 Stein manifold. 
\item[$\bullet$] (Algebraic geometry) $\tilde X$ is an algebraic variety.
\end{enumerate}

The algebro-geometric variant was investigated in \cite{chk} whose
 main result roughly 
 says that all such examples are obtained from 
Abelian varieties.

\begin{thm} \cite[Thm.1]{chk} \label{theoremquasiprojective}
Let $X$ be a normal, projective variety over $\c$ 
with  universal cover $\tilde X$.
The following are equivalent.
\begin{enumerate}
\item  $\tilde X$ is  biholomorphic to a  
quasi-projective variety.
\item  $\tilde X$ is biholomorphic to a 
product $\c^m \times F$ where $m\geq 0$
and $F$ is a projective, simply connected variety.
\end{enumerate}
\end{thm}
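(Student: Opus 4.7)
The implication (2) $\Rightarrow$ (1) is immediate, as $\c^m \times F$ is itself a quasi-projective variety. For the converse, set $\Gamma = \pi_1(X)$, acting holomorphically, freely, properly discontinuously and cocompactly on $\tilde X$. The plan is to construct a $\Gamma$-equivariant fibration of $\tilde X$ that separates the compact subvarieties (the ``$F$-direction'') from a contractible base which must be $\c^m$.

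\emph{Compact subvarieties and the fibration.} Since $\tilde X$ is quasi-projective, any compact irreducible analytic subset of $\tilde X$ is algebraic by Chow's theorem applied inside a projective compactification, and compact cycles on $\tilde X$ are parametrized by the open part of a Chow scheme of such a compactification. Let $d$ be the maximum dimension of a compact subvariety through a very general point, let $F$ denote a general such maximal subvariety, and let $\pi : \tilde X \dashrightarrow B$ be the rational map whose general fiber is $F$, obtained from Stein factorization of the incidence correspondence over the appropriate Chow component. Because the construction is canonical, $\Gamma$ acts on $B$ making $\pi$ equivariant, and $\pi$ descends to an algebraic map $X \to X'$.

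\emph{Identifying the base.} By maximality, $B$ contains no positive-dimensional compact subvarieties; also $B$ is simply connected (from the homotopy exact sequence for $\pi$, since $F$ is connected and $\tilde X$ is simply connected), is quasi-projective after a resolution, and carries a free cocompact $\Gamma$-action. The key claim is that such a $B$ is biholomorphic to $\c^m$ with $\Gamma$ acting by a lattice of translations. A plausible route is to descend a K\"ahler form from $X$ along the Shafarevich-type quotient $\tilde X \to B$, producing a $\Gamma$-invariant K\"ahler form; combined with the absence of compact subvarieties this forces $B$ to be Stein, and combined with simple connectedness and cocompactness of the $\Gamma$-action it should identify $B \cong \c^m$ and the $\Gamma$-action with a translation lattice, so that $B/\Gamma$ is, up to finite \'etale cover, an abelian variety.

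\emph{Splitting the fibration.} With $B = \c^m$ Stein and contractible, the family $\pi : \tilde X \to \c^m$ of simply connected projective varieties is differentiably trivial, and rigidity in the polarized Hilbert scheme of $F$ over a contractible Stein base yields a biholomorphic product decomposition $\tilde X \cong \c^m \times F$. Simple connectedness of $F$ is automatic from the homotopy long exact sequence. The main obstacle is the second step, namely showing $B \cong \c^m$ with $\Gamma$ acting as a lattice of translations. The other steps are formal consequences of standard facts about Chow schemes, Stein factorization, and the homotopy long exact sequence, whereas this step genuinely uses quasi-projectivity of $\tilde X$ rather than merely of $B$, and most likely proceeds by linking $B$ to the universal cover of the Albanese variety of $X'$ in order to force $\Gamma$'s action on $B$ to factor through a lattice in $\c^m$.
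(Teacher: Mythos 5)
This theorem is not proved in the present paper; it is cited from Claudon--H\"oring--Koll\'ar \cite{chk}, and the paper only recalls the structure of that proof. As described here, the proof in \cite{chk} proceeds in the opposite order from yours: it works first on $X$ itself, running the MMP and using the Abundance Conjecture together with Kobayashi--Ochiai \cite{kob-och} to exclude the general-type case and conclude that $\pi_1(X)$ has a finite-index abelian subgroup; only then, with $\pi_1(X)\cong\z^{2r}$, does it take the Albanese map $\alb_X:X\to\Alb(X)$ as the candidate fiber bundle and prove the bundle property using the geometry of subvarieties of abelian varieties and their finite ramified covers. Your route instead builds a Shafarevich-type fibration $\tilde X\dashrightarrow B$ on the universal cover first and then tries to identify the base.

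The genuine gap in your proposal is exactly the step you flag, but your suggested route around it is circular. You want to show $B\cong\c^m$ with $\Gamma$ acting as a lattice of translations ``by linking $B$ to the universal cover of the Albanese variety of $X'$,'' but $\Alb(X')$ is a quotient of $\c^{q(X')}$ by the image of $H_1(X',\z)$, and there is no reason for the cover of $\tilde X$ classified by the full fundamental group to have anything to do with the Albanese cover unless one already knows that $\pi_1(X')$ is (virtually) abelian --- which is precisely the theorem's hard content. Concretely, with no hypothesis beyond semialgebraicity the base $B$ can be a bounded symmetric domain (this is the whole point of the present paper!), and the only place where quasi-projectivity of $\tilde X$ as opposed to $B$ is used in \cite{chk} is in the Kodaira-dimension / Kobayashi--Ochiai argument that rules out general type; your sketch never touches Kodaira dimension. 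Two further unsubstantiated steps: the assertion that absence of compact subvarieties forces $B$ Stein is an open Shafarevich-type conjecture, not a known fact; and the claim that a family of polarized varieties over a contractible Stein base is automatically a holomorphic product skips the argument (present in both \cite{chk} and Section~\ref{sa.sect} here) that the moduli/Chow map must be constant, which is where the cocompact $\Gamma$-equivariance actually earns its keep.
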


(The proof in \cite{chk} assumes the validity of the
Abundance Conjecture \cite{reid}, so the theorem is unconditionally established 
only if $\dim X\leq 3$.)

Our aim is to extend these results to a larger class of examples, that includes 
 the compact quotients of  bounded, symmetric domains.
Let $\bd\subset \c^n$ be a  bounded, symmetric domain in any of its
usual representations. Then $\bd$ is not quasi-projective,
but it is {\it semialgebraic}. That is, if we set $z_j=x_j+iy_j$, then 
$\bd $ can be defined by 
polynomial inequalities of the form $f(x_1,\dots, x_n,y_1,\dots, y_n)>0$;
see (\ref{bunded.defn}) and (\ref{semialg.defns}) for details.
This leads to the following.

\begin{conj} \label{semi.main.conj}
Let $X$ be a normal, projective variety over $\c$
with  universal cover $\tilde X$.
The following are equivalent.
\begin{enumerate}
\item $\tilde X$ is  biholomorphic to a  
semialgebraic open subset of a projective variety.
\item  $\tilde X$ is biholomorphic to a 
product $\bd\times \c^m \times F$ where $\bd$ is a bounded
symmetric domain, $m\geq 0$
and $F$ is a normal, projective, simply connected variety.
\end{enumerate}
\end{conj}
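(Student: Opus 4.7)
The direction $(2)\Rightarrow(1)$ is routine: the bounded symmetric domain $\bd$ is by construction semialgebraic in $\c^n\subset\p^n$, the affine space $\c^m$ is Zariski (hence semialgebraic) open in $\p^m$, and $F$ is projective, so $\bd\times\c^m\times F$ embeds as a semialgebraic open subset of the projective variety $\p^n\times\p^m\times F$.

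For $(1)\Rightarrow(2)$, I would argue by induction on $\dim X$ and split according to a dichotomy on $\tilde X$. Either (a) $\tilde X$ admits no nonconstant bounded holomorphic functions, in which case one should show $\tilde X$ is in fact quasi-projective and invoke Theorem \ref{theoremquasiprojective} to conclude $\tilde X\cong\c^m\times F$ with no $\bd$-factor, or (b) $\tilde X$ carries a nontrivial algebra of bounded holomorphic functions. Implication (a) is where semialgebraicity plays a central role: a semialgebraic open subset of a projective variety with no bounded holomorphic functions should, after perhaps modifying the ambient $Y$, coincide with a Zariski open set, since an obstruction to being Zariski open would produce a non-algebraic piece of boundary across which bounded holomorphic functions could be manufactured from the semialgebraic defining inequalities.

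In case (b), many bounded holomorphic functions arise from restrictions of rational functions on $Y$ whose poles avoid $\overline{\tilde X}$, tempered by semialgebraic cut-offs; from these I would construct the holomorphic reduction $\phi:\tilde X\to B$, a $\pi_1(X)$-equivariant surjective submersion with connected fibers onto a bounded domain $B\subset\c^N$. The image $B$ is semialgebraic and inherits a cocompact discrete group of holomorphic automorphisms from the image of $\pi_1(X)$ in $\aut(B)$. Invoking the classification of bounded homogeneous domains together with rigidity results of Nadel, Frankel--Mok, and Mok for the Bergman and K\"ahler--Einstein metrics on bounded domains with cocompact automorphism groups, one concludes $B$ is a bounded symmetric domain. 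The fibers of $\phi$ have smaller dimension, are semialgebraic in projective fibers of a family over a quotient of $B$, and by construction carry no nonconstant bounded holomorphic functions, so the inductive hypothesis applied to the projective subvarieties they cover yields fibers of the form $\c^m\times F$. A final de Rham / Beauville--Bogomolov decomposition of the K\"ahler--Einstein metric on $\tilde X$, whose three factors have negative, zero, and positive Ricci curvature respectively, promotes the fibration to the desired product decomposition.

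The principal obstacle I expect is showing that $B$ is symmetric rather than merely bounded homogeneous: cocompactness of the automorphism group alone does not distinguish symmetric from exotic bounded domains in higher dimensions, so the semialgebraicity of $\tilde X$ must be fully exploited here, presumably through the algebraic behavior of the Bergman kernel on a semialgebraic set. A secondary, but still substantial, hurdle is the dichotomy step (a): showing that a semialgebraic open in a projective variety with no bounded holomorphic functions is quasi-projective is far from formal and likely requires a delicate analysis of the boundary $\overline{\tilde X}\setminus\tilde X$ together with algebraization techniques for its semialgebraic ends.
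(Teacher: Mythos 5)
Conjecture \ref{semi.main.conj} is stated in the paper as an \emph{open conjecture}; the paper contains no proof of it. What the paper does prove is the special case Theorem \ref{main.thm.quot-version}, under the additional hypothesis that $\pi_1(X)$ surjects onto a cocompact lattice $\Gamma$ on an irreducible bounded symmetric domain $\bd$; the content there is that the resulting holomorphic map $X\to\bd/\Gamma$ is a fiber bundle, established via the proper-homotopy arguments of Section \ref{top.sect} and the Chow-variety argument of Section \ref{sa.sect}. A three-step program toward the full conjecture is sketched in Section \ref{sec.conjectures}, and the authors explicitly state that Step 1 --- extracting the symmetric-domain factor and the structure of $\pi_1(X)$ from semialgebraicity of $\tilde X$ alone --- is untouched and ``seems to be the hardest.''

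Your sketch of $(1)\Rightarrow(2)$ reproduces exactly that open wall rather than circumventing it. The claim in your case (b) that a bounded semialgebraic domain with a cocompact discrete automorphism group must be symmetric is precisely Question \ref{bdd.dom.quot.then.sum}, which the paper poses as open; the rigidity theorems of \cite{vey,won,fra1} that you cite are noted in the paper to apply only ``under various additional restrictions,'' not in the generality you need. Your case (a) --- that a semialgebraic open subset of a projective variety with no nonconstant bounded holomorphic functions must be quasi-projective --- is likewise unproved, and even granting it, Theorem \ref{theoremquasiprojective} is conditional on Abundance for $\dim X>3$. There are further gaps: the holomorphic reduction $\phi:\tilde X\to B$ has no a priori reason to be a surjective submersion with connected semialgebraic fibers, and its fibers do \emph{not} ``by construction'' lack nonconstant bounded holomorphic functions --- the reduction only collapses bounded functions defined on all of $\tilde X$, which says nothing about bounded functions intrinsic to a single fiber, so the inductive hypothesis would not land in your case (a). In sum, your outline is in the same spirit as the program of Section \ref{sec.conjectures} (a non-abelian Albanese / holomorphic reduction to a bounded symmetric domain, followed by a fiber-bundle argument), but it does not close any of the gaps the authors identify; it restates them.
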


The first part of the proof of \cite{chk}
shows that if (\ref{theoremquasiprojective}.1) holds then
$\pi_1(X)$ has a finite index abelian subgroup.
The second part then shows that  Theorem \ref{theoremquasiprojective}
holds if  $\pi_1(X)$ is abelian.

Our main result is a non-abelian version of this second step.
In effect we prove Conjecture  \ref{semi.main.conj}
in case $\pi_1(X)$ acts properly discontinuously and freely
with compact quotient  on an
irreducible, bounded, symmetric domain $\bd$.
A three step approach to the proof of  Conjecture  \ref{semi.main.conj}
is outlined in Section \ref{sec.conjectures}. Our results establish Step 3
and most of Step 2. However, we say nothing about
Step 1 which seems to be the hardest.
On the other hand, we also describe certain intermediate covers
in a precise way.

\begin{thm}  \label{main.thm.quot-version}
Let $\Gamma$ be  a group acting properly discontinuously and freely
with compact quotient  on an
irreducible, bounded, symmetric domain $\bd$ of dimension $\geq 2$. 
Let $X$ be a smooth projective variety, $\pi_1(X)\to \Gamma$
a quotient and $\tilde X_{\Gamma}\to X$
 the corresponding $\Gamma$-cover.
Then the following are equivalent.
\begin{enumerate}
\item  $\tilde X_{\Gamma}$ is  biholomorphic to a  
semialgebraic subset of a projective variety.
\item $X$  is a  fiber bundle over $\bd/\Gamma$.
\item $\tilde X_{\Gamma}$  is biholomorphic to a 
product $\bd \times F$ where 
 $F$ is a projective  variety.
\end{enumerate}
\end{thm}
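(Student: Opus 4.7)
The plan is to establish the cycle $(3) \Rightarrow (1) \Rightarrow (2) \Rightarrow (3)$; two of these are essentially formal, and $(1) \Rightarrow (2)$ carries the substance. For $(3) \Rightarrow (1)$: by hypothesis $\bd$ is a semialgebraic subset of some $\c^n \subset \p^n$, so $\bd \times F$ is semialgebraic in $\p^n \times F$. For $(2) \Rightarrow (3)$: pull back the fiber bundle $X \to \bd/\Gamma$ along the universal cover to obtain a holomorphic fiber bundle $\tilde X_{\Gamma} \to \bd$ with projective fiber $F$ and structure group in the complex Lie group $\aut(F)$; since $\bd$ is a contractible Stein manifold, Grauert's Oka principle trivializes this bundle holomorphically, giving $\tilde X_{\Gamma} \cong \bd \times F$.

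The heart of the theorem is $(1) \Rightarrow (2)$, which I would split into two stages. \emph{Stage A} produces a holomorphic map $f \colon X \to \bd/\Gamma$ realizing the classifying map of $\pi_1(X) \twoheadrightarrow \Gamma$: Eells--Sampson supplies a harmonic representative of the homotopy class, and since $\bd$ is irreducible of dimension $\geq 2$ the rigidity theorems of Siu--Sampson--Carlson--Toledo--Mok force this harmonic map to be holomorphic or antiholomorphic; by symmetry I may assume it is holomorphic. Lifting to the $\Gamma$-cover yields a $\Gamma$-equivariant holomorphic map $\tilde f \colon \tilde X_{\Gamma} \to \bd$, which is the candidate for the bundle projection.

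\emph{Stage B} invokes the semialgebraic hypothesis to show $\tilde f$ is a fiber bundle. Properness of $f$ (since $X$ is compact) makes each fiber of $\tilde f$ a compact complex analytic subset of the projective ambient variety $Y \supset \tilde X_{\Gamma}$, hence a projective subvariety of $Y$, so the family of fibers gives a holomorphic classifying map $\bd \to \hilb(Y)$. The semialgebraic structure on $\tilde X_{\Gamma}$ should force this family to vary in a tame way, and, combined with $\Gamma$-equivariance together with cocompactness of $\Gamma \curvearrowright \bd$, should force the isomorphism class of the fibers to be locally constant on the connected $\bd$ and rule out critical values. Then $\tilde f$ is a submersion with constant projective fiber, hence a holomorphic fiber bundle, which descends to the desired fiber bundle $X \to \bd/\Gamma$.

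The main obstacle is Stage B: this is the only step that genuinely uses hypothesis (1), and converting the tameness intuition into a rigorous argument --- presumably through a detailed analysis of how the semialgebraic embedding $\tilde X_{\Gamma} \hookrightarrow Y$ interacts with the $\Gamma$-equivariant map $\tilde f$ and the classifying map to $\hilb(Y)$ --- is where I expect the real technical work to lie.
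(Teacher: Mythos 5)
Your proposed cycle $(3)\Rightarrow(1)\Rightarrow(2)\Rightarrow(3)$ is the same cycle the paper uses (written starting from $(2)$), and the two easy implications are handled exactly as in the paper: Grauert's Oka principle for $(2)\Rightarrow(3)$, and the fact that $\bd$ is a semialgebraic subset of $\c^n$ for $(3)\Rightarrow(1)$. Your Stage~A (Eells--Sampson plus Siu rigidity) is also the paper's route to a holomorphic $f\colon X\to\bd/\Gamma$. The issue is that you have left genuine gaps precisely where the work is, and one of them is hidden in Stage~A rather than confined to Stage~B as you claim.

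Concretely: Siu's rigidity theorem does not hold just because $\dim\bd\geq 2$ and $\bd$ is irreducible; it requires $dg_\sigma$ to have sufficiently large rank at some point. The paper secures this by proving the harmonic map is \emph{surjective}, and surjectivity is not automatic --- it is the content of the purely topological Theorem~\ref{alwayssurjective}, which uses that $\tilde X_\Gamma$ (being semialgebraic) is homotopy equivalent to a finite CW complex, hence gives a proper homotopy equivalence $\tilde X_\Gamma\to Z\times\tilde X$ and nontriviality of the image on locally finite homology. So hypothesis $(1)$ already enters Stage~A, contrary to your assertion that Stage~B is the only place it is used. Without this surjectivity input the rigidity step does not go through.

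For Stage~B, you correctly locate the heart of the theorem but offer only the intuition that semialgebraicity plus $\Gamma$-equivariance plus cocompactness ``should'' make the family locally constant and rule out critical values. The paper's mechanism is quite specific, and it is worth naming because a naive approach fails: the map $\tilde g\colon\tilde X_\Gamma\to\bd$ need not extend to the compactification $\bar X\supset\tilde X_\Gamma$ (Example~\ref{cubic.exmp}), so one cannot read off the fibration from $\bar X$ directly, and one works instead with Chow varieties $\chow^*_\alpha(\bar X)$ and $\chow^*_{(F,\alpha)}(\bar X)$ parametrizing cycles in the fiber class. The semialgebraic hypothesis shows the relevant loci inside these Chow varieties are open semialgebraic, and then the same topological surjectivity theorem (via Lemma~\ref{alg.maps.to.K.pi.1.lem}, applied to $\Gamma$-invariant closed analytic semialgebraic subsets) is invoked \emph{repeatedly}: first to show the generic fiber components are all isomorphic, then to rule out jumping/singular fibers, and finally Theorem~\ref{homologymap} is used to show the Stein factorization base maps isomorphically to $Y$. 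This iterated use of a single topological input --- rather than a Hilbert-scheme classifying-map argument --- is the concrete realization of your ``tameness'' intuition, and it is the part of the proof your proposal does not supply.
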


Note that in (\ref{main.thm.quot-version}.1)
 we do not assume that the $\Gamma$-action is
semialgebraic on $\tilde X_{\Gamma}$. As noted in 
\cite[3.1]{chk}, this is usually not true, but it holds
if  $\tilde X_{\Gamma}$ is  biholomorphic to a  bounded semialgebraic 
domain in $\c^n$ \cite{zai}.

\begin{rem} Small changes are needed in the statement of
Theorem \ref{main.thm.quot-version} if 
 $\bd$ is the unit disc in $\c$.
By \cite{siu87, MR1098610}, in this case there is a morphism
with connected fibers 
$g:X\to C$ to a smooth curve $C$ of genus $\geq 2$, but
$\pi_1(C)$ may be larger than $\Gamma$. 
The rest of the proof works if we replace
  $\tilde X_{\Gamma}$ by  the cover corresponding to $\pi_1(C)$.
\end{rem}

\begin{rem} Every biholomorphism of  $\bd\times \c^m \times F$
preserves the projections
$$
\bd\times \c^m \times F \to \bd\times \c^m  \to \bd.
$$
Thus, if the universal cover of $X$ is
$ \bd\times \c^m \times F$, then
$\pi_1(X)$ acts on $\bd\times \c^m$ and on $ \bd$
properly discontinuously but not necessarily  freely.
By passing to a finite cover $X'\to X$, we may assume that
the $\pi_1(X')$-action on   $ \bd$ is also free, modulo its kernel 
(cf.\ \cite[p.154]{MR0130324}).
 Thus if we quotient by $\pi_1(X')$,
 we obtain
morphisms of algebraic varieties
$X'\to Y'\to Z'$ such that
\begin{enumerate}
\item $X'\to Y'$ is a fiber bundle whose fiber is a
normal, projective variety,
\item $Y'\to Z'$ is a smooth morphism whose fibers are Abelian varieties
(but it need not be a holomorphic fiber bundle) and
\item $Z'=\bd/\Gamma'$ for some group $\Gamma'$ acting
properly discontinuously and freely.
\end{enumerate}
\end{rem}

\begin{say}[Main steps of the proof] \label{main.step.say}

A quotient  $\sigma: \pi_1(X)\to \Gamma$ is equivalent 
to a homotopy class of continuous maps
$X\to  \bd/\Gamma$. 

First, we use the  purely topological results of Section \ref{top.sect} to
prove that every map in this  homotopy class  is surjective.

Second, a combination of \cite{eel-sam} and \cite{siu}
implies that  there is a holomorphic map
$g_{\sigma}:X\to \bd/\Gamma$  that induces $\sigma$
on the fundamental groups; see (\ref{bunded.defn}) and 
(\ref{bunded.thm}) for details.
These two steps so far used only that $\tilde X_{\Gamma}$  is
homotopic to a finite CW complex.

The third step, which is modeled on
\cite[Sec.2]{chk},  is best explained  by the special case
when $Y:=\mathbb D/\Gamma$ is a curve, $X$ is a surface and 
 $g:X\to Y$ has connected fibers.

Fix a  normal, projective surface $\bar X$ that contains
$\tilde X$ as an open  semialgebraic subset.
Let $\tilde g:\tilde X\to \tilde Y$ be the lifting of $g$ and
$\tilde F$  a 
general fiber of
$\tilde g$. Note that $\tilde F$ has self-intersection 0
and it moves, thus it determines a morphism
$\bar g:\bar X\to \bar Y$. 

The following two observations almost contradict each other.
\begin{enumerate}
\item[(i)] $\bar g:\bar X\to \bar Y$ is an algebraic family of curves,
hence it gives an algebraic morphism of $\bar Y$ to the moduli space
of curves. The fibers of this moduli map are algebraic subsets
of $\bar Y$.
\item[(ii)] Each fiber of  $g:X\to Y$ gives rise to
$|\Gamma|$ copies of itself among the fibers of $\tilde g$.
Thus the moduli map has infinite fibers.
\end{enumerate}
The only way out is if the moduli map is constant
and all the smooth fibers of  $g:X\to Y$ are isomorphic to each other.
Note that any singular fiber of $g$ would lead to
infinitely many singular fibers of $\tilde g$ and of $\bar g$;
this is again impossible.
Thus there are no singular fibers and  $g:X\to Y$ is a  fiber bundle.
\end{say}

\begin{say}[Bounded symmetric domains]\label{bunded.defn}
For general reference, see \cite[Chap.VIII]{hel}.

Let $\bd$ be a bounded symmetric domain
and $\Gamma$  a group acting properly discontinuously and freely
with compact quotient  $\bd/\Gamma$.

Let $X$ be a smooth projective variety.
A group homomorphism $\sigma:\pi_1(X)\to \Gamma$ is equivalent
to a homotopy class of continuous maps
$X\to \bd/\Gamma$.
By Eells-Sampson \cite{eel-sam}, every  
homotopy class contains a harmonic map
$g_{\sigma}: X\to \bd/\Gamma$.
A theorem of Siu \cite[Thm.6.7]{siu} says that if $\bd$ is
irreducible and the 
rank of $dg_{\sigma}$ is large enough at some point, then
$g_{\sigma}$ is
either holomorphic or conjugate holomorphic.
The result applies whenever $g_{\sigma}$ is surjective and $\dim \bd\geq 2$,
which is assured by the purely topological Theorem \ref{alwayssurjective}.
 Thus, for a suitable choice
of the $\Gamma$-action on $\bd$ we may assume that
 $g_{\sigma}$ is  holomorphic.

We can summarize  the above considerations
as follows.
\end{say}

\begin{lem} \label{bunded.thm}
Let $\Gamma$ be a  group acting properly discontinuously and freely
with compact quotient  $\bd/\Gamma$ on an
irreducible, bounded, symmetric domain $\bd$ of dimension $\geq 2$. 
Let $X$ be a smooth projective variety, $\sigma:\pi_1(X)\to \Gamma$
a quotient and $\tilde X_{\Gamma}\to X$
 the corresponding $\Gamma$-cover.
Assume that $\tilde X_{\Gamma}$  is
homotopic to a finite CW complex.

Then there is a holomorphic map
$g_{\sigma}:X\to \bd/\Gamma$  that induces $\sigma$
on the fundamental groups. \qed 
\end{lem}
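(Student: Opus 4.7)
The plan is to assemble the three ingredients already sketched in (\ref{bunded.defn}): the classifying-space interpretation of $\sigma$, the Eells--Sampson existence theorem for harmonic maps, and Siu's rigidity theorem, with the topological input supplied by Theorem \ref{alwayssurjective}.

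First I would note that, since $\bd$ is contractible, the quotient $\bd/\Gamma$ is a $K(\Gamma,1)$. Standard obstruction theory then identifies homotopy classes of (based) continuous maps $X\to \bd/\Gamma$ with $\Hom(\pi_1(X),\Gamma)$, so the surjection $\sigma$ corresponds to a distinguished homotopy class $[\sigma]$ of continuous maps $X\to \bd/\Gamma$ inducing $\sigma$ on fundamental groups. The hypothesis that $\tilde X_{\Gamma}$ be homotopy equivalent to a finite CW complex is not needed for this step, but will be used below to guarantee surjectivity via Theorem \ref{alwayssurjective}.

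Next I would equip $\bd/\Gamma$ with its Bergman metric, which is K\"ahler with non-positive sectional curvature, and give $X$ any K\"ahler metric (coming e.g.\ from a projective embedding). Because $X$ and $\bd/\Gamma$ are both compact and the target has non-positive curvature, the theorem of Eells--Sampson \cite{eel-sam} produces a harmonic map $g_{\sigma}:X\to \bd/\Gamma$ in the class $[\sigma]$.

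Finally, Theorem \ref{alwayssurjective} forces every representative of $[\sigma]$, and in particular $g_{\sigma}$, to be surjective. Since $\dim \bd\geq 2$, at a point where $g_{\sigma}$ is a submersion the real rank of $dg_{\sigma}$ is at least $2\dim \bd\geq 4$, which places us squarely in the regime of Siu's strong rigidity theorem \cite[Thm.6.7]{siu}; together with the irreducibility of $\bd$, this concludes that $g_{\sigma}$ is either holomorphic or antiholomorphic. The antiholomorphic case is handled by replacing the $\Gamma$-action on $\bd$ by its conjugate under complex conjugation, which acts biholomorphically on the conjugate complex structure of $\bd$ (still a bounded symmetric domain isomorphic to $\bd$ as a real symmetric space). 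The only genuinely substantive step is the invocation of Siu's theorem, which supplies the complex-analytic rigidity; the other ingredients are essentially bookkeeping once Theorem \ref{alwayssurjective} is in hand.
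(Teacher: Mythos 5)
Your proposal is correct and follows essentially the same route as the paper: obstruction theory identifies $\sigma$ with a homotopy class of maps to the $K(\Gamma,1)$ space $\bd/\Gamma$, Eells--Sampson supplies a harmonic representative, Theorem \ref{alwayssurjective} (using the finite CW hypothesis) forces surjectivity so that Siu's rigidity theorem applies, and the antiholomorphic alternative is absorbed by conjugating the $\Gamma$-action. The extra details you supply (Bergman metric, Sard-type argument for the rank condition) are reasonable elaborations of what the paper leaves implicit.
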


\section{Maps to compact $K(\pi,1)$ spaces}
\label{top.sect}

In this section we consider the following.

\begin{ques}\label{basic.top.ques}
Let $X$ be a finite CW complex which is a $K(\pi,1)$ and $A$  a 
compact metric space mapping to $X$.  Let $\tilde X$ be the universal cover
 of $X$ and 
 $\tilde A$  the corresponding cover of $A$ (that is, the fibered product,
 or pull-back). We have a commutative fiber product diagram:
$$
\begin{array}{ccc}
\tilde A & \to & A\\
\tilde\varpi \downarrow \hphantom{\tilde\varpi}&& \hphantom{\varpi}\downarrow\varpi \\
\tilde X & \to & X.
\end{array}
$$
Suppose that $\tilde A$  has some finiteness properties. 
What can one conclude about $A\to X$?
\end{ques}

We get quite strong results if  $\tilde A$ is  homotopy equivalent to a 
 compact metric space 
(Theorems \ref{alwayssurjective}, \ref{homologymap}, 
\ref{genericallyfinitecase}), but we are not sure what the optimal
conclusions should be.
Most of the results in this section apply if  $\tilde X$ is contractible
 and $X=\tilde X/\Gamma$ for 
a properly discontinuous cocompact action of $\Gamma$ 
(that is, we do not need the action to be free).

\begin{defn}
Suppose $X$ is a metric space with the property that closed metric balls are compact.  Let $\pi\subset\operatorname{Isom}(X)$ be a group 
whose action is cocompact and properly discontinuous.  Denote its action by $T_\alpha:X\to X$, so $T_\alpha T_\beta=T_{\alpha\beta}$.  We 
call such a pair $(X,\pi)$ a \emph{proper $\pi$-space}.  If we additionally fix a set $\{p_\alpha\}_{\alpha\in\pi}$ so $T_\alpha p_\beta=p_{\alpha\beta}$, 
then we call such a triple $(X,\pi,\{p_\alpha\}_{\alpha\in\pi})$ a \emph{pointed proper $\pi$-space}.
\end{defn}

For any pointed proper $\pi$-space $(X,\pi,\{p_\alpha\}_{\alpha\in\pi})$, we may rescale the metric so that the closed unit balls of radius $1$ 
centered at the $\{p_\alpha\}_{\alpha\in\pi}$ cover all of $X$.  All of the results in this section are indifferent to such rescalings.  Therefore 
we shall often assume implicitly that the particular pointed proper $\pi$-space in question has this property.

\begin{lem}\label{Wfunc}
Let $(X,\pi,\{p_\alpha\}_{\alpha\in\pi})$ be a pointed proper $\pi$-space.  Let $F^t:X\to X$ for $t\in[0,1]$ 
be a homotopy between $F^0=\operatorname{id}_X$ and $F^1:X\to K\subseteq X$ where $K$ is compact.

We denote by $\pi !$ the set of total orders on $\pi$.  For any finite subset $P\subset\pi$, consider the continuous function 
$W_F:[0,1]^P\times\pi !\times X\to X$ defined by:
$$
W_F(\{t_\alpha\}_{\alpha\in P},\sigma,x):=
\Bigl(\prod_{\alpha\in P, \text{ordered by }\sigma}
T_\alpha\circ F^{t_\alpha}\circ T_\alpha^{-1}\Bigr)(x)
\eqno{(\ref{Wfunc}.1)}
$$
For $P_1\subseteq P_2$, map $[0,1]^{P_1}\to[0,1]^{P_2}$ by extending by zero.  Since $F^0$ is the identity, the functions 
$\{W_F\}_{P\subset\pi}$ are compatible with the directed system $\{[0,1]^P\times\pi !\times X\}_{P\subset\pi}$, 
and so we really have a continuous function defined on the direct (inductive) limit, which we also denote by $W_F$.

We have that for every $N<\infty$ there exists $M<\infty$ such that:
\begin{enumerate}\setcounter{enumi}{1}
\item 
$\sup\limits_{t_\alpha\ne 0}d(p_\alpha,x)<N \implies d(W_F(\{t_\alpha\}_{\alpha\in\pi},\sigma,x),x)<M$ and
\item 
$t_\gamma=1\text{ and }\sup\limits_{t_\alpha\ne 0}d(p_\alpha,p_{\gamma})<N \implies d(W_F(\{t_\alpha\}_{\alpha\in\pi},\sigma,x),p_{\gamma})<M$.
\end{enumerate}
\end{lem}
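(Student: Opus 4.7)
The well-definedness of $W_F$ on the inductive limit is immediate: since $F^0 = \operatorname{id}_X$, every factor $T_\alpha \circ F^0 \circ T_\alpha^{-1}$ with $t_\alpha = 0$ is the identity, so extending a tuple $\{t_\alpha\}_{\alpha \in P_1}$ by zero to a larger $P_2 \supseteq P_1$ only inserts identities into the ordered product and leaves the composition unchanged. For each fixed $\sigma$ and finite $P$, $W_F$ is a finite composition of continuous maps, hence continuous, and the compatibility transports this to the inductive limit.

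The substance is bounds (2) and (3), and the plan rests on two preparatory estimates. First, since $F$ is continuous and closed balls in $X$ are compact, for every $R < \infty$ there exists $R'(R) < \infty$ with $F([0,1] \times \bar B_R(p_e)) \subseteq \bar B_{R'(R)}(p_e)$; because $T_\alpha$ is an isometry carrying $p_e$ to $p_\alpha$, this upgrades to the per-factor estimate
$$
d(y, p_\alpha) \leq R \;\Longrightarrow\; d\bigl((T_\alpha\circ F^{t_\alpha}\circ T_\alpha^{-1})(y),\, p_\alpha\bigr) \leq R'(R).
$$
Second, cocompactness and proper discontinuity of the $\pi$-action yield, for each $R$, a uniform bound $C(R) < \infty$ on $\#\{\alpha \in \pi : d(p_\alpha, q) \leq R\}$, independent of $q \in X$.

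For (2), I apply the composition to $x$ from right to left. Let $y_0 := x$ and index the active factors in $\sigma$-order by $\alpha_1, \dots, \alpha_k$; by hypothesis $d(p_{\alpha_i}, x) < N$ for all $i$, so the second estimate gives $k \leq C(N)$. Inductively, if $d(y_i, x) \leq R_i$, then $d(y_i, p_{\alpha_{i+1}}) \leq R_i + N$, whence the per-factor estimate gives $d(y_{i+1}, p_{\alpha_{i+1}}) \leq R'(R_i + N)$ and thus $d(y_{i+1}, x) \leq R'(R_i + N) + N =: R_{i+1}$. Since $k \leq C(N)$ the recursion terminates after a number of steps bounded by $N$, and $R_k$ is bounded by a function of $N$ alone.

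For (3), the decisive point is that the $\gamma$-factor acts as a \emph{reset}: because $t_\gamma = 1$, the map $T_\gamma \circ F^1 \circ T_\gamma^{-1}$ has image contained in $T_\gamma(K)$, which lies in the ball of radius $\sup_{q\in K} d(p_e, q)$ about $p_\gamma$ \emph{regardless of the input}. Hence only the factors occurring after $\gamma$ in the $\sigma$-order affect the output location relative to $p_\gamma$; by hypothesis each such active factor has $d(p_\alpha, p_\gamma) < N$, so there are at most $C(N)$ of them, and iterating the per-factor estimate exactly as in (2) -- starting from a point within $\sup_{q\in K} d(p_e, q)$ of $p_\gamma$ -- bounds the final displacement by a function of $N$ alone. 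Taking $M$ to be the maximum of the two bounds closes the argument. I anticipate no conceptual obstacle; the main delicacy is recognizing that the combinatorial bound $C(N)$ from cocompactness is what converts the hypothesis ``all active $\alpha$ within distance $N$'' into a \emph{finite, $N$-bounded} number of active factors, without which the iterated per-factor estimate would not terminate in a $|P|$-independent bound.
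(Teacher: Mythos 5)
Your proof is correct, and it takes a genuinely different route from the paper's. The paper's key device is the equivariance identity $W_F(\{t_\alpha\},\sigma,x)=T_\gamma W_F(\{t_{\gamma\alpha}\},\sigma^{\gamma},T_\gamma^{-1}x)$: it picks $\gamma$ with $d(p_\gamma,x)\le 1$, translates the whole configuration so that the relevant factors land inside a fixed finite set $P_N$ and the evaluation point (resp.\ the reset set $K$) lands in a fixed compact, and then simply observes that $d(W_F(\cdot,\cdot,\cdot),\cdot)$ is a continuous function on a compact domain $[0,1]^{P_N}\times P_N!\times B(p_1,1)$ (resp.\ $\times K$), hence bounded. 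You avoid the translation identity entirely and instead argue locally, factor by factor: conjugating the ball-stability of $F$ by the isometry $T_\alpha$ gives a uniform per-factor estimate near $p_\alpha$, and cocompactness plus proper discontinuity give a uniform bound $C(N)$ on the number of active factors, so the iterated estimate terminates with a bound depending only on $N$ (and $K$). For (3) you correctly identify the $t_\gamma=1$ factor as a ``reset'' whose output lies in $T_\gamma(K)$ independent of its input, so only the factors applied afterward matter. What the paper's argument buys is brevity and avoidance of any explicit recursion; what yours buys is an elementary, quantitative bound that makes transparent exactly which finiteness hypotheses (compact closed balls, cocompactness, proper discontinuity) are doing the work, and it requires no auxiliary equivariance identity. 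Two cosmetic slips: the recursion count is bounded by $C(N)$, not ``by $N$'' as you wrote, and you should fix (or at least flag) the convention so that ``factors occurring after $\gamma$ in the $\sigma$-order'' unambiguously means ``factors applied after the $\gamma$-factor in the composition.''
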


\begin{proof}
For an order $\sigma\in\pi !$ and an
element $\gamma\in\pi$, define $\sigma^\gamma\in\pi !$ by the property:
$$
\beta_1\prec_{\sigma^\gamma}\beta_2\iff\gamma\beta_1\prec_\sigma\gamma\beta_2
$$
Then we have the following identity:
$$
W_F(\{t_\alpha\}_{\alpha\in\pi},\sigma,x)=T_\gamma W_F(\{t_{\gamma\alpha}\}_{\alpha\in\pi},\sigma^{\gamma},T_\gamma^{-1}x).
\eqno{(\ref{Wfunc}.4)}
$$

To prove (\ref{Wfunc}.2), note that given $x\in X$, we can find $\gamma\in\pi$ such that $d(p_{\gamma},x)\leq 1$.  Then by (\ref{Wfunc}.4) we have:
$$
d(W_F(\{t_\alpha\}_{\alpha\in\pi},\sigma,x),x)=d(W_F(\{t_{\gamma\alpha}\}_{\alpha\in\pi},\sigma^{\gamma},T_{\gamma}^{-1}x),T_{\gamma}^{-1}x)
$$
Given $N<\infty$, let $P_N=\{\alpha\in\pi:d(p_{\alpha},p_1)<N\}$ (this set is finite).  Then the above quantity is certainly 
in the image of the map $[0,1]^{P_N}\times P_N!\times B(p_1,1)\to\mathbb R_{\geq 0}$ which sends 
$(\{t_{\alpha}\}_{\alpha\in P_N},\sigma,y)$ to $d(W_F(\{t_{\alpha}\}_{\alpha\in P_N},\sigma,y),y)$.  
The domain of this function is compact, so its range is bounded above by some $M<\infty$.

To prove (\ref{Wfunc}.3), we observe that by (\ref{Wfunc}.4) we have:
$$
d(W_F(\{t_\alpha\}_{\alpha\in\pi},\sigma,x),p_\gamma)=d(W_F(\{t_{\gamma\alpha}\}_{\alpha\in\pi},\sigma^{\gamma},T_{\gamma}^{-1}x),p_1)
$$
Note that since $t_\gamma=1$, the function corresponding to $1\in\pi$ in the evaluation of $W_F$ on the right hand side has 
image contained in $K$.  Thus the right hand side is in the image of the map $[0,1]^{P_N}\times P_N!\times K\to\mathbb R_{\geq 0}$ which sends 
$(\{t_{\alpha}\}_{\alpha\in P_N},\sigma,y)$ to $d(W_F(\{t_{\alpha}\}_{\alpha\in P_N},\sigma,y),p_1)$.  
The domain of this function is compact, so its range is bounded above by some $M<\infty$.
\end{proof}

The notation $W_F$ will be used throughout this section to denote the function given in (\ref{Wfunc}.1).  Also, the order $\sigma\in\pi !$ 
will be irrelevant from now on.  Thus we assume implicitly that such an order is fixed, and we suppress $\sigma$ from the notation.  
Let us now fix a continuous (cutoff) function $w:\mathbb R_{\geq 0}\to[0,1]$ with $w(t)=1$ for $t\in[0,1]$ and $w(t)=0$ for $t\geq 2$.

\begin{lem}\label{coarsehe}
Let $(X,\pi)$ be a proper $\pi$-space, and suppose that $X$ is contractible.  
Then there exists a continuous function $R:X\times X\times[0,1]\to X$ such that $R(x,y,0)=x$, $R(x,y,1)=y$, and 
for every $N<\infty$ there exists $M<\infty$ such that:
$$
d(x,y)<N\implies d(x,R(x,y,t))<M
\eqno{(\ref{coarsehe}.1)}
$$
\end{lem}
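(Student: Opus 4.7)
The construction uses the $W_F$ machinery from Lemma \ref{Wfunc} applied to a contraction $F^t:X\to X$ with $F^0=\operatorname{id}_X$ and $F^1\equiv p_1$, which exists since $X$ is contractible; this is the case $K=\{p_1\}$ of Lemma \ref{Wfunc}. The crucial feature is that each translate $T_\alpha\circ F^1\circ T_\alpha^{-1}$ is the constant map to $p_\alpha$, so setting $t_\alpha=1$ in the $W_F$ product forces the corresponding factor to collapse onto $p_\alpha$ independently of what came before.

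Introduce cutoff weights $\phi_\alpha(x):=w(d(p_\alpha,x))$, which equal $1$ on $\{d(p_\alpha,\cdot)\leq 1\}$ and are supported in $\{d(p_\alpha,\cdot)\leq 2\}$; by the rescaling convention, for every $x\in X$ some $\phi_\gamma(x)$ equals $1$. I plan to build $R$ by concatenating five continuous stages. In stages (I) on $[0,\tfrac{1}{5}]$ and (III) on $[\tfrac{4}{5},1]$, apply $W_F$ to input $x$ (respectively $y$) while ramping parameters from $0$ up to $\phi_\alpha(x)$ (respectively from $\phi_\alpha(y)$ down to $0$); these stages give paths from $x$ to $A(x):=W_F(\{\phi_\alpha(x)\},x)$ and from $A(y):=W_F(\{\phi_\alpha(y)\},y)$ to $y$. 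The middle block connects $A(x)$ to $A(y)$ in three sub-stages: in (II.a) on $[\tfrac{1}{5},\tfrac{2}{5}]$ hold input at $x$ and raise parameters to $\max(\phi_\alpha(x),\phi_\alpha(y))$; in (II.b) on $[\tfrac{2}{5},\tfrac{3}{5}]$ hold parameters fixed at this maximum while deforming the input continuously from $x$ to $y$ along some auxiliary path (for instance one built from $F^t$ itself, passing through $p_1$); in (II.c) on $[\tfrac{3}{5},\tfrac{4}{5}]$ hold input at $y$ and lower parameters to $\phi_\alpha(y)$. The five stages match at the four boundaries by construction, and $R(x,y,0)=x$, $R(x,y,1)=y$ by design.

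The boundedness assertion (\ref{coarsehe}.1) splits into two cases. In stages (I), (II.a), (II.c), and (III), the $W_F$-input is either $x$ or $y$, and in all of them every nonzero $t_\alpha$ satisfies $d(p_\alpha,x)<N+2$ when $d(x,y)<N$, so Lemma \ref{Wfunc}(2) directly yields $d(R(x,y,t),x)<M(N)$. Stage (II.b) is the delicate one: the input may wander arbitrarily far from $x$, but by design there always exists some $\gamma$ with $d(p_\gamma,x)\leq 1$ for which $s_\gamma=\phi_\gamma(x)=1$ throughout this stage; Lemma \ref{Wfunc}(3) then pins $R(x,y,t)$ within bounded distance of $p_\gamma$, and hence of $x$.

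\textbf{Main obstacle.} The one substantive step is stage (II.b), which is precisely why Lemma \ref{Wfunc} states estimate (\ref{Wfunc}.3) alongside (\ref{Wfunc}.2). The design choice of holding parameters fixed at $\max(\phi_\alpha(x),\phi_\alpha(y))$ during (II.b) is made exactly so that the condition ``$t_\gamma=1$ for some $\gamma$ close to $x$'' persists as the input varies, enabling a uniform application of (\ref{Wfunc}.3). Once this setup is in place, the remaining verifications are a direct unpacking of the two estimates of Lemma \ref{Wfunc}.
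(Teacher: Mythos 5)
Your construction is correct, but it is more elaborate than the paper's, and the extra machinery in stage (II.b) is actually superfluous. The paper's proof uses a two--stage homotopy with the cutoffs $w(d(x,p_\alpha))$ based solely on $x$ in \emph{both} halves: on $[0,\tfrac12]$ it ramps the parameters from $0$ up to $w(d(x,p_\alpha))$ with input $x$, and on $[\tfrac12,1]$ it ramps the same $x$--based parameters back down to $0$ but with input $y$. The two halves glue at $t=\tfrac12$ because for each $x$ there is $\gamma$ with $w(d(x,p_\gamma))=1$, so the factor $T_\gamma\circ G^1\circ T_\gamma^{-1}$ in the composition is the constant map to $p_\gamma$, and hence $W_G(\{w(d(x,p_\alpha))\},\cdot)$ is \emph{independent of its last argument}; thus $W_G(\{w(d(x,p_\alpha))\},x)=W_G(\{w(d(x,p_\alpha))\},y)$. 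After this, (\ref{Wfunc}.2) alone gives boundedness. Notice that the same observation applies to your stage (II.b): since $\max(\phi_\gamma(x),\phi_\gamma(y))=1$ for some $\gamma$, the value of $W_F$ there is already independent of the input, so your ``auxiliary path from $x$ to $y$'' has no effect on the output --- the map is simply constant on that interval, and the appeal to (\ref{Wfunc}.3) and the care about choosing the path continuously in $(x,y)$ are unnecessary. Likewise the $\max(\phi_\alpha(x),\phi_\alpha(y))$ device and the stages (II.a), (II.c) can be dispensed with: the paper simply keeps the $x$--based cutoffs throughout, which at first looks asymmetric but is made symmetric by the constancy observation. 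Your decomposition does have the pedagogical advantage of making the role of (\ref{Wfunc}.3) visible, but for this lemma (\ref{Wfunc}.2) suffices; (\ref{Wfunc}.3) is what the paper actually needs later, in the proof of Proposition \ref{keyhe}.
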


\begin{proof}
Fix $\{p_\alpha\}_{\alpha\in\pi}$ such that $(X,\pi,\{p_\alpha\}_{\alpha\in\pi})$ is a pointed proper $\pi$-space.

Let $G^t:X\to X$ ($t\in[0,1]$) be a contraction to $p_1$, that is $G^0(x)=x$ and $G^1(x)=p_1$.  We define:
$$
R(x,y,t):=\begin{cases}W_G(\{2t\cdot w(d(x,p_\alpha))\},x)&t\in[0,\frac 12]\cr W_G(\{2(1-t)\cdot w(d(x,p_\alpha))\},y)&t\in[\frac 12,1]\end{cases}
$$
For $t=\frac 12$, the two definitions agree, since one of the functions in the composition defining $W_G(\{w(d(x,p_\alpha))\},\cdot)$ 
is constant (since for all $x$, one of the values $w(d(x,p_\alpha))$ is equal to $1$).  Clearly $R(x,y,0)=x$ and $R(x,y,1)=y$.  
By (\ref{Wfunc}.2), we have the desired property (\ref{coarsehe}.1).
\end{proof}

\begin{prop}\label{keyhe}
Notation and assumptions as in  (\ref{basic.top.ques}).
Suppose that we have a homotopy equivalence $\rho:\tilde A\to Z$ for some compact metric space $Z$.  Then the 
product map $(\rho,\tilde\varpi):\tilde A\to Z\times\tilde X$ is a proper homotopy equivalence.
\end{prop}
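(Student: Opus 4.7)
The plan is first to verify that $(\rho,\tilde\varpi)$ is proper, then to construct an explicit proper homotopy inverse $\Phi : Z \times \tilde X \to \tilde A$ using the $W$-function machinery of Lemma \ref{Wfunc}, and finally to check that both compositions are properly homotopic to the identity.

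Properness of $(\rho,\tilde\varpi)$ is immediate: $\varpi : A \to X$ is proper since $A$ is compact, and properness is preserved under the base change along the covering $\tilde X \to X$, so $\tilde\varpi : \tilde A \to \tilde X$ is proper. Any compact $K \subseteq Z \times \tilde X$ is contained in $Z \times K'$ for some compact $K' \subseteq \tilde X$, whence $(\rho,\tilde\varpi)^{-1}(K) \subseteq \tilde\varpi^{-1}(K')$ is compact.

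To build the inverse, lift $p_1 \in \tilde X$ to a point $\tilde a_0 \in \tilde A$ and equip $\tilde A$ with a $\pi$-invariant proper metric, giving $(\tilde A, \pi, \{T_\alpha \tilde a_0\})$ the structure of a pointed proper $\pi$-space (a cover of a compact metric space admits such a metric). Choose a homotopy inverse $\psi : Z \to \tilde A$ of $\rho$ and a homotopy $H^t : \tilde A \to \tilde A$ from $H^0 = \operatorname{id}_{\tilde A}$ to $H^1 = \psi\rho$, so that $H^1$ has image in the compact set $K := \psi(Z)$. Define
$$
\Phi(z,\tilde x) := W_H\bigl(\{w(d(p_\alpha,\tilde x))\}_{\alpha\in\pi},\,\psi(z)\bigr),
$$
where $w$ is the cutoff function introduced before Lemma \ref{coarsehe}. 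Only finitely many parameters are nonzero for each $\tilde x$, so $W_H$ is a finite composition and $\Phi$ is continuous. By Lemma \ref{Wfunc}.3 applied to $(\tilde A,\pi)$, whenever $\tilde x$ lies within distance $1$ of $p_\gamma$ the point $\Phi(z,\tilde x)$ lies within bounded distance of $T_\gamma \tilde a_0$; hence $\tilde\varpi \circ \Phi$ is a bounded perturbation of the projection $Z \times \tilde X \to \tilde X$, and in particular $\Phi$ is proper.

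To verify $\Phi \circ (\rho,\tilde\varpi) \simeq \operatorname{id}_{\tilde A}$ properly, interpolate the time parameters by a scale factor $s \in [0,1]$: at $s = 0$ the composition reduces to $\psi\rho$, which is homotopic to $\operatorname{id}_{\tilde A}$ via $H^t$, and Lemma \ref{Wfunc}.2 bounds the displacement throughout the interpolation, so the concatenated homotopy is proper. For $(\rho,\tilde\varpi) \circ \Phi \simeq \operatorname{id}_{Z\times\tilde X}$ properly, combine the same parameter interpolation with the coarse homotopy $R$ from Lemma \ref{coarsehe} in the $\tilde X$-coordinate and the homotopy $\rho\psi \simeq \operatorname{id}_Z$ in the $Z$-coordinate. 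The main obstacle will be coherently organizing these four homotopies: even though $\rho$ is not $\pi$-equivariant, each $\pi$-twist $\rho \circ T_\gamma \circ \psi : Z \to Z$ is homotopy equivalent to $\operatorname{id}_Z$ (since $T_\gamma$ is a self-homeomorphism of $\tilde A$), and it is precisely the uniform metric bounds from Lemma \ref{Wfunc} that keep the cumulative displacement bounded so that none of the interpolating homotopies allow a point to escape to infinity.
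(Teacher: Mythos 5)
Your construction of $\Phi$ is exactly the map $\phi(z,x)=W_F(\{w(d(x,p_\alpha))\},\rho'(z))$ used in the paper, the metrization idea is in the right spirit (the paper takes $d_{\tilde A}=d_A+d_{\tilde X}$, which is important because it makes distances in $\tilde A$ comparable, up to the additive constant $\operatorname{diam}A$, to distances downstairs in $\tilde X$), and the properness of $\Phi$ and of $(\rho,\tilde\varpi)$ are correctly argued. But there is a genuine gap in the final step, and it is not merely a matter of ``coherently organizing'' the homotopies: the \emph{order} in which you run them is wrong, and in the order you propose the intermediate homotopies are simply not proper.

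Concretely, for $\Phi\circ(\rho,\tilde\varpi)\simeq\operatorname{id}_{\tilde A}$ you propose first to scale the $W_H$-parameters from $1$ down to $0$ (arriving at $\psi\rho$) and then to deform $\psi\rho$ to $\operatorname{id}$ via $H^t$. Neither stage is proper. The second stage maps all of $\tilde A$ into the fixed compact set $K=\psi(Z)$ at time $1$, so the preimage of $K$ under the track of the homotopy contains all of $\tilde A\times\{1\}$; this is the standard reason why a homotopy inverse need not be a \emph{proper} homotopy inverse. The first stage also fails: Lemma~\ref{Wfunc}.2 bounds the displacement of $W_H(\{t_\alpha\},x)$ from the input point $x$ only under the hypothesis that the basepoints $p_\alpha$ with $t_\alpha\ne 0$ are within bounded distance of $x$ itself. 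Here the input point is $\psi\rho(a)\in K$, while the nonzero parameters are those with $p_\alpha$ near $\tilde\varpi(a)$ --- and for $a$ far from $K$ these $p_\alpha$ are far from $K$ too, so the hypothesis of Lemma~\ref{Wfunc}.2 is violated and you get no bound. The paper avoids both problems by reversing the two steps: first deform the input from $\rho'(\rho(a))$ to $a$ \emph{with the parameters fixed}, which is proper because Lemma~\ref{Wfunc}.3 pins the output near $p_\gamma$ (and hence near $a$) regardless of the input point, as long as some $t_\gamma=1$; then scale the parameters to $0$ \emph{with the input fixed at $a$}, which is now genuinely a Lemma~\ref{Wfunc}.2 situation since $a$ sits within bounded distance of the relevant $p_\alpha$. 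One further small point: you speak of ``lifting $p_1\in\tilde X$ to a point $\tilde a_0\in\tilde A$,'' but $\tilde\varpi$ is not assumed surjective (Theorem~\ref{alwayssurjective}, which uses this proposition, is precisely what proves surjectivity of $\varpi$ in the cases of interest), so such a lift need not exist; pick an arbitrary point of $\tilde A$ instead, or, as the paper does, simply inherit the metric from $A$ and $\tilde X$ without choosing basepoints in $\tilde A$ at all.
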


\begin{proof}
Fix a metric on $\tilde X$ and points $\{p_\alpha\}_{\alpha\in\pi_1(X)}$ so that $(\tilde X,\pi_1(X),\{p_\alpha\}_{\alpha\in\pi_1(X)})$ is 
a pointed proper $\pi_1(X)$-space.  Metrize $\tilde A$ by $d_{\tilde A}=d_A+d_{\tilde X}$; then $\tilde A$ is also a proper $\pi_1(X)$-space, 
and the quotient metric induces the same topology as $d_A$.

Denote by $\rho':Z\to\tilde A$ the function paired with $\rho$ forming the homotopy equivalence.  Let $F^t:\tilde A\to\tilde A$ ($t\in[0,1]$) 
be a homotopy between the identity map and $\rho'\circ\rho$.  Thus for all $a\in\tilde A$, 
we have $F^0(a)=a$ and $F^1(a)\in K$ for some compact set $K=\operatorname{im}\rho'$.

Define a map $\phi:Z\times\tilde X\to\tilde A$ as follows:
$$
\phi(z,x):=W_F(\{w(d(x,p_\alpha))\},\rho'(z))
$$
By (\ref{Wfunc}.3), $\phi:Z\times\tilde X\to\tilde A$ satisfies $\sup_{(z,x)\in Z\times\tilde X}d(x,\tilde\varpi(\phi(z,x)))<\infty$.  
Thus $\phi$ is proper.  Since $A$ is compact, the map $(\rho,\tilde\varpi):\tilde A\to Z\times\tilde X$ is proper as well.  
We now prove both compositions are proper homotopic to the identity.

Consider first the function $(\rho,\tilde\varpi)\circ\phi:Z\times\tilde X\to Z\times\tilde X$, which is given by:
$$
(\rho(W_F(\{w(d(x,p_\alpha))\},\rho'(z))),\tilde\varpi(\phi(z,x)))
\eqno{(\ref{keyhe}.1)}
$$
Using (\ref{coarsehe}), the above function  is proper homotopic via:
$$
(\rho(W_F(\{w(d(x,p_\alpha))\},\rho'(z))),R(\tilde\varpi(\phi(z,x)),x,t))
$$
to:
$$
(\rho(W_F(\{w(d(x,p_\alpha))\},\rho'(z))),x)
\eqno{(\ref{keyhe}.2)}
$$
Now since $Z$ is compact, the family $(\rho(W_F(\{t\cdot w(d(x,p_\alpha))\},\rho'(z))),x)$ is a proper homotopy between 
(\ref{keyhe}.2) and $(\rho(\rho'(z)),x)$, which by definition is proper homotopic to $(z,x)$, i.e.\ the identity map.

Now consider the function $\phi\circ(\rho,\tilde\varpi):\tilde A\to\tilde A$, which is given by:
$$
W_F(\{w(d(\tilde\varpi(a),p_\alpha))\},\rho'(\rho(a)))
\eqno{(\ref{keyhe}.3)}
$$
We know $\rho'(\rho(a))$ is homotopic to the identity map.  Thus (\ref{keyhe}.3)
 is homotopic to:
$$
W_F(\{w(d(\tilde\varpi(a),p_\alpha))\},a)
\eqno{(\ref{keyhe}.4)}
$$
and the homotopy is proper by (\ref{Wfunc}.3).  Now $W_F(\{t\cdot w(d(\tilde\varpi(a),p_\alpha))\},a)$ 
gives a homotopy between (\ref{keyhe}.4)
 and the identity map; this homotopy is proper by (\ref{Wfunc}.2).
\end{proof}

We shall have occasion below to use various flavors of (usually singular) homology and cohomology.  We let $H_\ast$ and $H^\ast$ denote standard
homology and cohomology; these are functorial with respect to homotopy classes of continuous maps.  We let $H_\ast^{\operatorname{lf}}$ and 
$H^\ast_c$ denote locally finite homology and compactly supported cohomology respectively; these are functorial with respect to proper homotopy classes of proper maps 
(see \cite[III.10--12]{homologybook} for some definitions and basic properties).  We use $\tilde H_\ast$ to denote reduced homology.  Our coefficient group is always $\mathbb Z$.

\begin{thm}\label{alwayssurjective}
Notation and assumptions as in  (\ref{basic.top.ques}).
Assume further that either $X$ is a manifold or that $X$ is a 
(possibly singular) complex analytic space.  

If $\tilde A$ is homotopy equivalent to a nonempty
compact metric space, then $A\to X$ is surjective.
\end{thm}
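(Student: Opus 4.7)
The plan is to argue by contradiction via a Thom-class obstruction in compactly supported cohomology, combined with the proper homotopy equivalence of Proposition \ref{keyhe}.

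First, I would note that $\tilde\varpi\colon\tilde A\to\tilde X$ is proper because $A$ is compact and $\tilde A$ is the fiber product. If $A\to X$ is not surjective then neither is $\tilde\varpi$, and since its image is closed in $\tilde X$, it misses some nonempty open set $U\subseteq\tilde X$. Because the smooth locus of $\tilde X$ is open and dense (whether $X$ is a manifold or a complex analytic space), I may choose $p\in U$ lying in the smooth locus, a closed coordinate ball $\bar B'\subseteq U$ around $p$, and a smaller open coordinate ball $B$ with $\bar B\subset B'$; set $d=\dim_{\mathbb R}\tilde X$ at $p$.

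Next I would construct a detecting class $\tau_p\in H^d_c(\tilde X;\mathbb Z)$: extend by zero along the open inclusion $B\hookrightarrow\tilde X$ a generator of $H^d_c(B)\cong\mathbb Z$ (use $\mathbb Z/2$ coefficients in the non-orientable real case; complex analytic spaces are canonically oriented). To see $\tau_p\neq 0$, I would pair it with the Borel--Moore fundamental class $[\tilde X]\in H^{\operatorname{lf}}_d(\tilde X)$: by naturality of cap products the evaluation $\langle\tau_p,[\tilde X]\rangle$ reduces to the local pairing $\langle\tau_B,[B]\rangle=1$. In the manifold case this is simply Poincar\'e duality $H^d_c(\tilde X)\cong H_0(\tilde X)=\mathbb Z$; in the complex analytic case one uses that the singular locus has real codimension at least two, so $H^{\operatorname{lf}}_{2n}(\tilde X)=H^{\operatorname{lf}}_{2n}(\tilde X^{\mathrm{sm}})$ contains the nonzero orientation class of the smooth locus.

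To close the argument, I would apply Proposition \ref{keyhe} to produce a proper homotopy inverse $\phi\colon Z\times\tilde X\to\tilde A$ of $f=(\rho,\tilde\varpi)$. Composing the proper homotopy $f\circ\phi\simeq\operatorname{id}$ with $\operatorname{pr}_{\tilde X}$ shows that $\tilde\varpi\circ\phi$ is properly homotopic to $\operatorname{pr}_{\tilde X}\colon Z\times\tilde X\to\tilde X$, so these induce the same pullback $H^d_c(\tilde X)\to H^d_c(Z\times\tilde X)$. The image of $\tilde\varpi\circ\phi$ lies in $\tilde\varpi(\tilde A)\subseteq\tilde X\setminus\bar B'\subseteq\tilde X\setminus B$; by the open--closed long exact sequence in $H^*_c$, $\tau_p$ restricts to zero on $\tilde X\setminus B$, so $(\tilde\varpi\circ\phi)^*\tau_p=0$. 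On the other hand, K\"unneth (valid because $Z$ is compact) gives $\operatorname{pr}_{\tilde X}^*\tau_p=1_Z\otimes\tau_p\in H^0(Z)\otimes H^d_c(\tilde X)$, which is nonzero since $Z\neq\emptyset$ and $\tau_p\neq 0$. This contradiction completes the argument.

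The main obstacle I anticipate is the nonvanishing of $\tau_p$ in $H^d_c(\tilde X)$ when $\tilde X$ is a singular complex analytic space, which rests on the existence of a nonzero Borel--Moore fundamental class for such spaces (ultimately a consequence of the codimension bound on the singular locus). Everything else is a formal diagram chase in compactly supported cohomology.
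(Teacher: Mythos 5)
Your argument is correct, and it is essentially the Poincar\'e--Lefschetz dual of the paper's proof: where the paper works in locally finite (Borel--Moore) homology and argues that the fundamental class of the component through $x$ cannot factor through $H_n^{\operatorname{lf}}(\tilde X-\{x\})$, you instead build a ``bump'' class $\tau_p\in H^d_c(\tilde X)$ supported near the missed point and show the two properly homotopic maps $\tilde\varpi\circ\phi$ and $\operatorname{pr}_{\tilde X}$ pull it back differently. Both hinge on Proposition \ref{keyhe} in exactly the same way, and both need the same Borel--Moore fundamental class of the component through the chosen smooth point to detect nontriviality. Your version buys a slightly more self-contained detection step (the K\"unneth computation of $\operatorname{pr}_{\tilde X}^*\tau_p$ rather than the assertion that $Z\times\tilde X\to\tilde X$ is surjective on $H_*^{\operatorname{lf}}$), at the small cost of needing the open--closed long exact sequence in $H^*_c$; the content is the same.

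One minor point worth tightening: you write $H^{\operatorname{lf}}_{2n}(\tilde X)=H^{\operatorname{lf}}_{2n}(\tilde X^{\mathrm{sm}})$, which is not quite right when $\tilde X$ has components of different dimensions; what you actually want (and all you need) is that the irreducible component of $\tilde X$ through $p$ carries a Borel--Moore fundamental class of degree $d=2n$ whose pairing with $\tau_p$ is $1$, which is exactly what the paper invokes. With that adjustment the proof is complete.
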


\begin{proof}
Let us call the homotopy equivalence $\rho:\tilde A\to Z$.  
Pick any point $x\in\tilde X$ where $\tilde X$ is locally a manifold of dimension $n$, and suppose that $x$ is not in the image of $\tilde A$.  Then we 
have the following commutative diagram:
$$
\begin{array}{ccc}
\tilde A & \stackrel{\rho\times\tilde\varpi}{\longrightarrow} & 
Z\times\tilde X\\
\tilde\varpi  \downarrow \hphantom{\tilde\varpi}&& \hphantom{p_{\tilde X}}\downarrow p_{\tilde X}\\
\tilde X-\{x\}& \to & \tilde X
\end{array}
$$
By (\ref{keyhe}), the map $\tilde A\to Z\times\tilde X$ is a proper homotopy equivalence, and thus induces an isomorphism on $H_\ast^{\operatorname{lf}}$.  
Clearly the map $Z\times\tilde X\to\tilde X$ is surjective on $H_\ast^{\operatorname{lf}}$.  Thus the map $\tilde A\to\tilde X$ is surjective on $H_\ast^{\operatorname{lf}}$.  On the other hand, 
certainly $H_n^{\operatorname{lf}}(\tilde X-\{x\})\to H_n^{\operatorname{lf}}(\tilde X)$ is not surjective: in the case $X$ is a manifold, 
the fundamental class $[\tilde X]\in H_n^{\operatorname{lf}}(\tilde X)$ is not in the image, and in the case $X$ is a complex analytic 
space, the fundamental class of the irreducible component of $\tilde X$ containing $x$ is not in the image.  This is a contradiction, 
so we conclude that $x$ is in the image of $\tilde A\to\tilde X$.

Thus every point $x\in X$ where $X$ is locally a manifold is in the image of $A\to X$.  If $X$ is a manifold, we are done.  If $X$ is a complex 
analytic space, observe that the set of points where $X$ is a manifold is dense.  On the other hand, $A$ is compact, so the image of $A\to X$ is 
closed.  Thus we are done in this case as well.
\end{proof}

\begin{thm}\label{homologymap}
Notation and assumptions as in  (\ref{basic.top.ques}).
Assume further that $A\to X$ is a complex analytic map of
compact complex analytic spaces and that $X$ is normal.
Suppose that $\tilde A$ is homotopy equivalent to a finite CW complex.  

Then for every fiber  $\tilde A_x$ of $\tilde A$, 
there is a natural map $H_\ast(\tilde A)\to H_\ast(\tilde A_x)$ whose composition with $H_\ast(\tilde A_x)\to H_\ast(\tilde A)$ is the identity.
\end{thm}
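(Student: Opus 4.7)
My plan is to exploit Proposition~\ref{keyhe}. Let $Z$ be a finite CW complex homotopy equivalent to $\tilde A$. By Proposition~\ref{keyhe} we obtain a proper homotopy equivalence $(\rho,\tilde\varpi):\tilde A\to Z\times\tilde X$ with proper homotopy inverse $\phi:Z\times\tilde X\to\tilde A$. Since $X$ is a $K(\pi,1)$, $\tilde X$ is contractible, so the projection $Z\times\tilde X\to Z$ is a (non-proper) homotopy equivalence; hence so is $\rho:\tilde A\to Z$, and for each $x\in\tilde X$ the map $\phi(\cdot,x):Z\to\tilde A$ is a homotopy inverse of $\rho$.

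The plan reduces the problem to showing that $i:\tilde A_x\hookrightarrow\tilde A$ induces an isomorphism on homology. Indeed, the commutative square
\[
\begin{array}{ccc}
\tilde A_x & \xrightarrow{\,i\,} & \tilde A \\
\rho|_{\tilde A_x}\downarrow\hphantom{\rho|_{\tilde A_x}} && \hphantom{(\rho,\tilde\varpi)}\downarrow(\rho,\tilde\varpi) \\
Z\times\{x\} & \hookrightarrow & Z\times\tilde X
\end{array}
\]
has right vertical and bottom horizontal arrows both homotopy equivalences, so $i_*$ is an isomorphism iff $(\rho|_{\tilde A_x})_*$ is. In that case $r_*:=(\rho|_{\tilde A_x})_*^{-1}\circ\rho_*:H_*(\tilde A)\to H_*(\tilde A_x)$ is a natural retraction of $i_*$, with $r_*\circ i_*=\operatorname{id}$ being automatic.

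To establish the homology isomorphism, I would use the complex analytic structure to construct a map $\psi:Z\to\tilde A_x$ homotopic, as a map into $\tilde A$, to $\phi(\cdot,x)$. The fiber $\tilde A_x$ is a compact complex analytic subspace of $\tilde A$, being identified via the covering with the fiber of $\varpi:A\to X$ over the image of $x$ in $X$; by classical results on analytic subvarieties it admits a fundamental system of open neighborhoods in $\tilde A$ that deformation retract onto it, and by properness of $\tilde\varpi$ these have the form $\tilde\varpi^{-1}(U)$ for small open $U\ni x$ in $\tilde X$. On the other hand, by (\ref{Wfunc}.3) the image of $\phi(\cdot,x)$ lies in $\tilde\varpi^{-1}(B(x,M))$ for a uniform constant $M$. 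Once $\psi$ is produced, one has $(\rho|_{\tilde A_x})\circ\psi\simeq\rho\circ\phi(\cdot,x)\simeq\operatorname{id}_Z$, and $i\circ\psi\simeq\phi(\cdot,x)$ is a homotopy equivalence, so $(i\circ\psi)_*=i_*\circ\psi_*$ is an isomorphism, making $i_*$ surjective; finiteness of the CW type of $\tilde A$ together with the finiteness of $H_*(\tilde A_x)$ (as $\tilde A_x$ is a compact analytic space) should then close the gap by forcing $i_*$ to be injective, hence an isomorphism.

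The main obstacle, and the heart of the proof, is bridging the uniform bounded scale of Proposition~\ref{keyhe} with the arbitrarily small scale of the local analytic deformation retractions onto $\tilde A_x$. I see two natural approaches: (a) modify the cutoff $w$ in the construction of $\phi$ to shrink the image of $\phi(\cdot,x)$ into an arbitrarily small neighborhood of $\tilde A_x$, exploiting the flexibility of the proper homotopy class of $\phi$; or (b) iterate the local deformation retraction in a controlled way, using the finite CW hypothesis on $\tilde A$ to ensure convergence to a map into $\tilde A_x$. Everything else follows routinely; carrying out this bridging step carefully is the key technical content.
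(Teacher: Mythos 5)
Your proposal takes a genuinely different route from the paper's proof, but it has a gap that is more than a technical detail left to be filled in. The paper's proof never attempts to show that $i_\ast:H_\ast(\tilde A_x)\to H_\ast(\tilde A)$ is an isomorphism. Instead it constructs the retraction $r$ directly, via the chain
$$
H_\ast(\tilde A)\xrightarrow{\rho_\ast}H_\ast(Z)\xrightarrow{\times[\tilde X]}H_{\ast+2n}^{\operatorname{lf}}(Z\times\tilde X)\xrightarrow{\phi_\ast}H_{\ast+2n}^{\operatorname{lf}}(\tilde A)\to H_{\ast+2n}(\tilde A,\tilde A\setminus\tilde A_x)=H_\ast(\tilde A_x),
$$
identifies the last part with a cap product against $\tilde\varpi^\ast([[\tilde X]])\in H^{2n}_c(\tilde A)$, and verifies $r\circ i_\ast=\operatorname{id}$ by a short cap-product computation using that $\tilde\varpi\circ\phi$ is proper homotopic to $p_{\tilde X}$. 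Nothing in this forces, or uses, surjectivity of $i_\ast$. Your plan reduces the theorem to the strictly stronger claim that $i_\ast$ (equivalently $(\rho|_{\tilde A_x})_\ast$) is an isomorphism, and you then need the inverse $(\rho|_{\tilde A_x})_\ast^{-1}$ in order to even define your candidate retraction $r_\ast$.

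The gap is concrete. Suppose you could produce the map $\psi:Z\to\tilde A_x$ with $i\circ\psi\simeq\phi(\cdot,x)$; this would give that $i_\ast\circ\psi_\ast$ is an isomorphism, so $i_\ast$ is \emph{surjective}, and that $(\rho|_{\tilde A_x})_\ast\circ\psi_\ast=\operatorname{id}$, so $(\rho|_{\tilde A_x})_\ast$ is also \emph{surjective}. But injectivity of $i_\ast$ (equivalently, of $(\rho|_{\tilde A_x})_\ast$) is precisely the content of the theorem you are trying to prove, and your appeal to ``finiteness of the CW type \dots\ should then close the gap by forcing $i_\ast$ to be injective'' is not valid: a surjection of finitely generated abelian groups need not be injective unless you already know the ranks agree, which you do not. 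For a special (e.g.\ reducible) fiber $\tilde A_x$, one fully expects extra classes in $H_\ast(\tilde A_x)$, and the theorem's force is exactly that they do not die in $H_\ast(\tilde A)$; you cannot assume this going in. Note also that the paper treats general $x$ by first proving the claim for $x$ in the locus $U$ where $\tilde A\to\tilde X$ is a smooth bundle, and then passing to nearby fibers using normality of $X$; this case distinction is essential and is absent from your outline. Finally, the $\psi$-construction that you flag as ``the heart of the proof'' is indeed nontrivial, but even if you supplied it, the argument as written would still not close, for the injectivity reason above.
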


\begin{proof}
Let $U$ be the set of points $x\in\tilde X$ where $\tilde X$ is locally a $C^\infty$-manifold over which $\tilde A\to\tilde X$ is a 
$C^\infty$-bundle.  Then $\tilde X\setminus U$ is a closed subvariety.  Suppose we prove the theorem for all $x\in U$; then the general 
case is done as follows.  Given any fiber $\tilde A_x$, we can find an open neighborhood which deformation retracts onto it.  Such an 
open neighborhood contains $\tilde A_u$ for all $u\in U\cap N_\epsilon(x)$.  Since $\tilde X$ is normal, $U$ is locally connected near 
$u$.  Thus we get a well defined map $H_\ast(\tilde A)\to H_\ast(\tilde A_u)\to H_\ast(\tilde A_x)$ with the desired property.  Now let 
us prove the result for $x\in U$.

Let $\rho:\tilde A\to Z$ be the homotopy equivalence from $\tilde A$ to a finite CW complex $Z$, and let $\phi:Z\times\tilde X\to\tilde A$ be 
the resulting proper homotopy equivalence from (\ref{keyhe}).  Let $2n$ be the dimension of $\tilde X$ as a 
real manifold, and define $H_\ast(\tilde A)\to H_\ast(\tilde A_x)$ as follows:
\begin{equation*}
H_\ast(\tilde A)\xrightarrow{\rho_\ast}H_\ast(Z)\xrightarrow{\times[\tilde X]}H_{\ast+2n}^{\operatorname{lf}}(Z\times\tilde X)
\xrightarrow{\phi_\ast}H_{\ast+2n}^{\operatorname{lf}}(\tilde A)\to H_{\ast+2n}(\tilde A,\tilde A\setminus\tilde A_x)=H_\ast(\tilde A_x)
\end{equation*}
By construction, the maps for different nearby values of $x$ are compatible.  Now let us prove that the composition $H_\ast(\tilde A)\to H_\ast(\tilde A_x)\to H_\ast(\tilde A)$ 
is the identity map.

Let us first consider the last few maps in this long composition, namely:
\begin{equation*}
H_{\ast+2n}^{\operatorname{lf}}(\tilde A)\to H_{\ast+2n}(\tilde A,\tilde A\setminus\tilde A_x)=H_\ast(\tilde A_x)\to H_\ast(\tilde A)
\end{equation*}
This map is just given by the cap product with $\tilde\varpi^\ast([[\tilde X]])\in H^{2n}_c(\tilde A)$, where we denote 
by $[[\tilde X]]\in H^{2n}_c(\tilde X)$ the unique class whose evaluation on the fundamental class of $\tilde X$ is $1$.  
Thus we conclude that the composition $H_\ast(\tilde A)\to H_\ast(\tilde A_x)\to H_\ast(\tilde A)$ is:
\begin{equation*}
H_\ast(\tilde A)\xrightarrow{\rho_\ast}H_\ast(Z)\xrightarrow{\times[\tilde X]}H_{\ast+2n}^{\operatorname{lf}}(Z\times\tilde X)\xrightarrow{\phi_\ast}H_{\ast+2n}^{\operatorname{lf}}(\tilde A)\xrightarrow{\frown\tilde\varpi^\ast([[\tilde X]])}H_\ast(\tilde A)
\end{equation*}
Note that $\alpha\in H_\ast(Z)$ is sent to:
\begin{align*}
\phi_\ast(\alpha\times[\tilde X])\frown\tilde\varpi^\ast([[\tilde X]])&=\phi_\ast((\alpha\times[\tilde X])\frown\phi^\ast\tilde\varpi^\ast([[\tilde X]]))\cr
&=\phi_\ast((\alpha\times[\tilde X])\frown p_{\tilde X}^\ast([[\tilde X]]))\cr
&=\phi_\ast(\alpha\times\{x\})
\end{align*}
For this, we used the fact from (\ref{keyhe}) that $\tilde\varpi\circ\phi$ is proper homotopic to $p_{\tilde X}:Z\times\tilde X\to\tilde X$.
Thus our map is just $H_\ast(\tilde A)\xrightarrow{\rho_\ast}H_\ast(Z)\xrightarrow{(z\mapsto(z,x))_\ast}H_\ast(Z\times\tilde X)\xrightarrow{\phi_\ast}H_\ast(\tilde A)$, 
which is clearly the identity map.
\end{proof}

\begin{thm}\label{genericallyfinitecase}
Notation and assumptions as in  (\ref{basic.top.ques}).
Assume further that $A\to X$ is a generically 
finite complex analytic map of
compact complex analytic spaces and that $X$ is normal.

Suppose that $\tilde A$ is homotopy equivalent to a finite CW complex. 
 Then $A\to X$ is a bundle 
with finite fiber.
\end{thm}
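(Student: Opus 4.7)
The plan is to use the retraction of Theorem \ref{homologymap} to control both the dimension and the cardinality of each fiber of $A\to X$, then invoke Zariski's Main Theorem to produce an explicit trivialization of the pulled-back cover.

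First, I would show every fiber of $A\to X$ is finite. If some fiber $A_{x_0}$ contained a compact irreducible component $B$ of positive complex dimension $d$, then since $\pi=\pi_1(X)$ is infinite (the only other possibility, $\pi=1$, forces $X$ to be a point and the theorem is immediate), the preimage of $x_0$ in $\tilde X$ would be an infinite discrete set, and over each lift the fiber of $\tilde A$ would carry a disjoint isomorphic copy $B_\alpha$ of $B$. Each $B_\alpha$ has a nonzero fundamental class in $H_{2d}(\tilde A;\mathbb Q)$, and pairwise disjointness makes these classes $\mathbb Q$-linearly independent (they are distinguished by local homology in disjoint neighborhoods). This would make $H_{2d}(\tilde A;\mathbb Q)$ infinite-dimensional, contradicting $\tilde A\simeq$ finite CW. Hence $A\to X$ is a finite analytic map, and it is surjective by Theorem \ref{alwayssurjective}.

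Next, I would invoke Theorem \ref{homologymap} to note that the inclusion $H_0(\tilde A_x)\hookrightarrow H_0(\tilde A)$ is split and hence injective. Since it sends each point of $\tilde A_x$ to the class of its component in $\tilde A$, distinct points of the fiber must lie in distinct connected components of $\tilde A$; in particular $|A_x|\leq r:=|\pi_0(\tilde A)|<\infty$. Taking $x$ generic, the $k$ points of $\tilde A_{\tilde x}$ (where $k$ is the generic degree of $A\to X$) populate $k$ distinct dominating components. Since dominating components have positive-integer generic degrees summing to $k$ and each contributes at most one point to any fiber by injectivity, there are exactly $k$ dominating components $C_1,\dots,C_k$, each birational to $\tilde X$; moreover, each $C_i$ must be irreducible, since a non-dominating subcomponent inside $C_i$ would contribute a second point to $C_i\cap\tilde A_{\tilde x}$ over its image and violate injectivity. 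By Zariski's Main Theorem applied to the finite birational map $C_i\to\tilde X$ with $\tilde X$ normal, each $C_i$ is isomorphic to $\tilde X$ via the projection.

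Finally, I would rule out any additional non-dominating components of $\tilde A$, which would arise from lower-dimensional irreducible components of $A$. The sections $C_1,\dots,C_k$ already force $|A_x|\geq k$ for every $x$, and since $x\mapsto|A_x|$ is lower semi-continuous for a finite analytic map, its generic value $k$ is simultaneously an upper bound; so $|A_x|=k$. A component $A_j\subset A$ with image $f(A_j)\subsetneq X$ would add at least one more point at generic $x\in f(A_j)$, contradicting the bound; hence no such $A_j$ exists. A proper analytic map with constant finite fiber cardinality is automatically a covering map, that is, a fiber bundle with finite fiber.

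The hard part is the first step, where one must arrange the $\pi$-translates of a single bad positive-dimensional fiber to manufacture infinite-dimensional top-degree rational homology of $\tilde A$. The remaining steps are a clean assembly of Theorem \ref{homologymap}'s splitting, Zariski's Main Theorem on the normal base $\tilde X$, and lower semi-continuity of fiber cardinality.
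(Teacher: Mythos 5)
Your proposal has two serious gaps, and both are in the first half of the argument; the later steps, which are more routine, cannot repair them.

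The first gap is in your Step 1, where you want to rule out positive-dimensional fibers by producing infinitely many $\mathbb{Q}$-linearly independent classes $[B_\alpha]\in H_{2d}(\tilde A;\mathbb{Q})$ coming from disjoint translates of a compact component $B\subset A_{x_0}$. The asserted independence, ``distinguished by local homology in disjoint neighborhoods,'' is not justified and is in fact false in general: the detection map $H_{2d}(\tilde A)\to H_{2d}(\tilde A,\tilde A\setminus U_\alpha)$ can be identically zero, because $[B_\alpha]$ may be homologous in $\tilde A$ to a cycle supported entirely outside $U_\alpha$. A concrete illustration: in $\tilde A=\mathbb{P}^1\times\mathbb{C}$ the disjoint curves $B_n=\mathbb{P}^1\times\{n\}$ all represent the \emph{same} nonzero class in $H_2$, and the long exact sequence of $(\tilde A,\tilde A\setminus B_0)$ shows that $H_2(\tilde A)\to H_2(\tilde A,\tilde A\setminus B_0)$ vanishes. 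So pairwise disjointness gives you nothing, and Step 1 does not establish finiteness of the fibers.

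The second gap is more central: you have the direction of the splitting in Theorem \ref{homologymap} reversed. That theorem produces a map $H_\ast(\tilde A)\to H_\ast(\tilde A_x)$ whose composite with the inclusion-induced map $H_\ast(\tilde A_x)\to H_\ast(\tilde A)$ is the identity \emph{on $H_\ast(\tilde A)$}. This makes the inclusion-induced map $H_\ast(\tilde A_x)\to H_\ast(\tilde A)$ split \emph{surjective}, not injective (and indeed the paper's proof of the present theorem uses it in exactly that form: ``the homology of any fiber of $\tilde A\to\tilde X$ surjects onto $H_\ast(\tilde A)$''). Your Step 2 rests entirely on injectivity of $H_0(\tilde A_x)\to H_0(\tilde A)$ in order to conclude that distinct points of a fiber lie in distinct connected components of $\tilde A$; with only surjectivity that conclusion, the bound $|A_x|\le|\pi_0(\tilde A)|$, and the entire section/count argument that follows all disappear.

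For contrast, the paper's route is quite different and worth internalizing: after reducing to $\tilde A$ connected (with finite CW model $Z$), it applies the surjectivity of Theorem \ref{homologymap} to a \emph{generic} (hence finite) fiber to get $H_i(Z)=0$ for $i>0$, proves $\pi_1(Z)=1$ by an intersection-number argument, deduces that $Z$ is contractible so that $\tilde A\to\tilde X$ and hence $A\to X$ are homotopy equivalences, and only then excludes positive-dimensional fibers, via a five-lemma comparison of the pairs $(A,\varpi^{-1}(S))$ and $(X,S)$ together with the dimension inequality $\dim\varpi^{-1}(S)>\dim S$ from Zariski's main theorem. The positive-dimensional fibers are ruled out at the very end from global homological information rather than up front, which is exactly what sidesteps the difficulty your Step 1 runs into.
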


\begin{proof}
Denote by $Z$ the finite CW complex homotopy equivalent to $\tilde A$.  Then $Z$ has finitely many connected components, corresponding to 
the connected components of $\tilde A$.  We have a representation $\pi_1(X)\to\operatorname{Sym}(\pi_0(\tilde A))=\operatorname{Sym}(\pi_0(Z))$.  
It suffices to prove the theorem in the case where we replace $X$ with the finite covering corresponding to the kernel of this representation.  
In this case, the connected components of $A$ correspond exactly to the connected components of $\tilde A$, and it suffices to prove the theorem 
for each of these separately.  Thus we may assume without loss of generality that $A$ and $\tilde A$ are connected.  Thus $Z$ is connected as well.

By (\ref{homologymap}), we know that the homology of any fiber of $\tilde A\to\tilde X$ surjects onto $H_\ast(\tilde A)=H_\ast(Z)$.  The former can be just a finite 
number of points, so we conclude that $H_i(Z)=0$ for $i>0$.  Since $Z$ is connected, we have $H_0(Z)=\mathbb Z$.

Fix a CW structure on $\tilde X$.  On the chain level (for cellular cohomology), we have $C_c^\ast(Z\times\tilde X)=C^\ast(Z)\otimes C_c^\ast(\tilde X)$.  
By the universal coefficient theorem, we have $H^0(Z)=\mathbb Z$ and $H^i(Z)=0$ 
for $i>0$.  Thus the K\"unneth formula (see \cite[Thm.9.16]{MR1757274})
implies there is an isomorphism $H_c^\ast(\tilde X)\to H_c^\ast(Z\times\tilde X)=H_c^\ast(\tilde A)$.  
Thus we conclude that $\dim\tilde A=\dim\tilde X$ and that $\tilde A$ is irreducible ($\tilde X$ is irreducible since it is normal).

Now let us show that $Z$ is simply connected.  Suppose $(\mathbb S^1,\mathbf 0)\to(Z,z)$ is any map of pointed spaces representing $\alpha\in\pi_1(Z,z)$.  Then we get 
a corresponding map $f:\mathbb S^1\times\tilde X\to Z\times\tilde X\to\tilde A$ (the latter map being the proper homotopy equivalence from (\ref{keyhe})).  
Pick a point $a\in\tilde A$ where $\tilde A$ is a manifold, and smooth $f$ in a neighborhood of $f^{-1}(\{a\})$, so that (for a generic smoothing) $f^{-1}(\{a\})$ is a disjoint 
union of some number of circles, say $C_1,\ldots,C_k$, each of which is transverse to $\mathbf 0\times\tilde X$.  Let $[\tilde A]$ denote the fundamental 
class of $\tilde A$, so that the natural map on $H_\ast^{\operatorname{lf}}$ (induced from $\tilde X\to Z\times\tilde X\to\tilde A$) 
sends $[\tilde X]$ to $[\tilde A]$.  Now we may write:
$$
1=\langle\{a\},[\tilde A]\rangle=\langle\{a\},f(\mathbf 0\times\tilde X)\rangle=\langle f^{-1}(\{a\}),\mathbf 0\times\tilde X\rangle=\sum_i\langle C_i,\mathbf 0\times\tilde X\rangle
$$
Each term $\langle C_i,\mathbf 0\times\tilde X\rangle$ is just the winding number of $C_i$ around $\mathbb S^1$.  Equip each $C_i$ which intersects 
$\mathbf 0\times\tilde X$ with a base point on this intersection.  Then each such $C_i$ is represents the element $\alpha^{\langle C_i,\mathbf 0\times\tilde X\rangle}$ 
in $\pi_1(Z,z)$.  On the other hand, $C_i$ is mapped to $\{a\}$ in $\tilde A$, and thus is trivial in $\pi_1(\tilde A)=\pi_1(Z)$.  Thus 
$\alpha^{\langle C_i,\mathbf 0\times\tilde X\rangle}$ is trivial in $\pi_1(Z,z)$.  Taking the product over all $C_i$ with 
nonzero $\langle C_i,\mathbf 0\times\tilde X\rangle$ and observing that the sum of these values is $1$, we conclude that $\alpha\in\pi_1(Z,z)$ is trivial.  
Thus $Z$ is simply connected.

Since $Z$ is simply connected, and $H_i(Z)=0$ for $i>0$, we conclude that $Z$ is contractible.  Thus $\tilde A\to\tilde X$ is a homotopy equivalence.  
From this, we see that the map $A\to X$ induces an isomorphism on $\pi_\ast$, and thus is a homotopy equivalence as well.  Now this implies 
that $\varpi_\ast([A])=[X]$, so in particular the general fiber of $A\to X$ is a single point.

If $A$ is projective and $\varpi^{-1}(\{x\})$ is positive dimensional, then
$\bigl[\varpi^{-1}(\{x\})\bigr]$ is a nonzero class in
$H_{\ast}(A)$ that gets killed in $H_{\ast}(X)$. This is a
contradiction, hence $A\to X$ is an isomorphism.

In the general case, we argue as follows.
Let $S\subset X$ be the set of $x\in X$ whose fiber $\varpi^{-1}(\{x\})$ is not a single point.  Then $S$ is a closed subvariety of $X$.  
Suppose that $S$ is nonempty, and consider the natural morphism of long exact sequences:
$$
\begin{array}{c@{\hspace{3pt}}c@{\hspace{3pt}}c@{\hspace{3pt}}c@{\hspace{3pt}}c@{\hspace{3pt}}c@{\hspace{3pt}}c@{\hspace{3pt}}c@{\hspace{3pt}}c}
H_{\ast+1}(A)& \to & H_{\ast+1}(A,\varpi^{-1}(S))& \to & H_\ast(\varpi^{-1}(S))& \to & H_\ast(A)& \to & H_\ast(A,\varpi^{-1}(S))\\
 \downarrow && \downarrow && \downarrow && \downarrow && \downarrow\\
H_{\ast+1}(X)& \to & H_{\ast+1}(X,S)& \to & H_\ast(S)& \to & H_\ast(X)& \to & H_\ast(X,S)
\end{array}
$$
A closed subvariety of a complex analytic space has an open neighborhood which deformation retracts onto it.  
Hence we can replace the relative homologies with the (reduced) homologies of the quotient spaces:
$$
\begin{array}{c@{\hspace{3pt}}c@{\hspace{3pt}}c@{\hspace{3pt}}c@{\hspace{3pt}}c@{\hspace{3pt}}c@{\hspace{3pt}}c@{\hspace{3pt}}c@{\hspace{3pt}}c}
H_{\ast+1}(A)& \to & \tilde H_{\ast+1}(A/\varpi^{-1}(S))& \to & H_\ast(\varpi^{-1}(S))& \to & H_\ast(A)& \to & \tilde H_\ast(A/\varpi^{-1}(S))\cr
 \downarrow && \downarrow && \downarrow && \downarrow && \downarrow\\
H_{\ast+1}(X)& \to & \tilde H_{\ast+1}(X/S)& \to & H_\ast(S)& \to & H_\ast(X)& \to & \tilde H_\ast(X/S)
\end{array}
$$
The map $A\to X$ is a homotopy equivalence, so the induced maps on $H_\ast$ are isomorphisms.  The map $A/\varpi^{-1}(S)\to X/S$ 
is a homeomorphism, so the induced maps on $H_\ast$ are isomorphisms.  Thus by the five lemma, the map $H_\ast(\varpi^{-1}(S))\to H_\ast(S)$ 
is an isomorphism.  However, $X$ is normal so Zariski's main theorem implies that $\dim\varpi^{-1}(S)>\dim S$, contradicting the 
isomorphism $H_\ast(\varpi^{-1}(S))\overset\sim\to H_\ast(S)$.  Hence $S$ is empty, that is $A\to X$ is an isomorphism of complex spaces.
\end{proof}

\section{Semialgebraic covers}\label{sa.sect}

\begin{defn}[Semialgebraic sets] \label{semialg.defns}
See  \cite[Chap.2]{bcr} for  a detailed treatment
and for the results that we use.

A  {\it basic open semialgebraic subset} of $\r^n$ is defined by
finitely many polynomial inequalities $g_i(x_1,\dots, x_n)> 0$. 
Using finite intersections and
complements we get all semialgebraic subsets.

Let $Y$ be a complex, affine, algebraic variety.
Choose any (not necessarily closed) embedding $Y\subset \c^N$. 
Identifying $\c^N$ with $\r^{2N}$ we get the notion of
semialgebraic subsets of $\c^N$ and of $Y$. 
The latter is independent of the embedding.
Thus we can talk about {\it semialgebraic subsets} of any complex 
algebraic variety. If $f:X\to Y$ is a morphism of varieties
and $W\subset Y$ is semialgebraic, then so is $f^{-1}(W)$.
By the Tarski--Seidenberg theorem, if 
$V\subset X$ is semialgebraic, then so is $f(V)$.

The closure, interior or a connected component of a semialgebraic set is again
semialgebraic. Every compact semialgebraic set has a
semialgebraic triangulation. Thus every 
open semialgebraic subset is homeomorphic to the interior
of a finite polyhedron. In particular, it is
homotopic to a finite CW complex.

An open semialgebraic subset of an algebraic variety
$Y$ is also naturally a complex space.
We say that a complex space $W$ is {\it semialgebraic}
if it is biholomorphic to an  open semialgebraic subset of
a projective variety $Y$. Note that usually $Y$ and the
embedding are very far from being unique in any sense.
\end{defn}

\begin{rem} Gabrielov pointed out that the above
 properties are also shared by sets
definable in an o-minimal theory; see \cite{dri}.
Thus our results have natural analogs in any o-minimal theory.
It would be interesting to find some examples of universal covers
that are o-minimal but
not semialgebraic.
\end{rem}

\begin{defn}[Chow varieties]\label{chow.defn}
See \cite[Secs.X.6--8]{hod-ped} or \cite[Sec.I.3]{rc-book}
for precise definitions and proofs.

Let $Z$ be a projective variety over $\c$.
An effective $r$-cycle is a formal linear combination
$W=\sum_i m_iW_i$ where the $W_i$ are  
irreducible $r$-dimensional subvarieties of $Z$
and the $m_i$ are natural numbers. The homology class of
$W$ is defined as  $[W]:=\sum m_i[W_i]\in  H_{2r}(Z,\z)$.

For a homology class 
$\alpha\in H_{2r}(Z,\z)$,
let $\chow_{\alpha}(Z)$ denote the Chow variety parametrizing
those  effective $r$-cycles whose homology class 
equals $\alpha$. For an $r$-cycle $W$, the corresponding
Chow point  is denoted by $Ch(W)\in \chow_{\alpha}(Z)$.
Then $\chow_{\alpha}(Z)$ is a projective algebraic variety and
there is a universal family
$$
\begin{array}{ccc}
\univ_{\alpha}(Z) & \stackrel{\pi}{\to} & Z\\
u\downarrow\hphantom{u} && \\
\chow_{\alpha}(Z).
\end{array}
$$
Fix next a normal, projective variety $F$ and consider the
subset $\chow_{(F,\alpha)}(Z)\subset \chow_{\alpha}(Z)$
parametrizing the images of embeddings $\tau:F\into Z$ such that
$[\tau(F)]=\alpha$. Unfortunately, $\chow_{(F,\alpha)}(Z)$
need not be algebraic if $F$ has an infinite discrete
group of automorphisms.
We can, however, easily remedy this problem.
Fix ample divisors  $L$ (resp.\ $H$) on $Z$ (resp.\ on  $F$)
and a number $C>0$. 
We can then look at  the images of embeddings $\tau:F\into Z$ such that
$[\tau(F)]=\alpha$ and the intersection numbers
$\bigl(L^i\cdot \tau^* H^j\bigr)_{F}$ are
$\leq C$  for $i+j=\dim F$. 
(Note that this is essentially equivalent to bounding
the degree, and thus the homology class, of the graph
of $\tau$ in $F\times Z$ under the product polarization given by
$H$ and $L$.) 
These form a constructible
algebraic subset   $\chow^{\circ}_{(F,H,C,\alpha)}(Z,L)\subset \chow_{\alpha}(Z)$.

In order to avoid working with constructible sets,
let  $\chow_{(F,H,C,\alpha)}(Z,L)$ denote the closure of
$\chow^{\circ}_{(F,H,C,\alpha)}(Z,L)$ in $ \chow_{\alpha}(Z)$.
There is a universal family
$$
\begin{array}{ccc}
\univ_{(F,H,C,\alpha)}(Z,L) & \stackrel{\pi}{\to} & Z\\
u\downarrow\hphantom{u} && \\
\chow_{(F,H,C,\alpha)}(Z,L)
\end{array}
$$ 
where $u$ is a fiber bundle with fiber $F$ over
$\chow^{\circ}_{(F,H,C,\alpha)}(Z,L)$.
\end{defn}

The aim of this section is to prove the following.

\begin{thm}\label{alg.maps.to.K.pi.1.thm}
 Let $Y$ be a smooth projective variety
with  universal cover $\tilde Y$. Assume that $\tilde Y$ is
contractible. 
Let $g:X\to Y$ be a 
morphism from a  normal, projective variety $X$ to $Y$
and  $\tilde X:=\tilde Y\times_YX$ the corresponding $\pi_1(Y)$-cover.
 If  $\tilde X$ is open, semialgebraic  in a projective variety $\bar X$
then  $g:X\to Y$
is a fiber bundle.
\end{thm}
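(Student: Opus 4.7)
The plan is to follow the outline of Step 3 in (\ref{main.step.say}), with the Chow-variety machinery of (\ref{chow.defn}) in place of the linear-system construction special to the surface/curve case.

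Since $\tilde Y$ is contractible, $Y$ is a compact $K(\pi_1(Y),1)$, and since $\tilde X$ is semialgebraic in a projective variety it is homotopy equivalent to a finite CW complex (\ref{semialg.defns}). Theorem \ref{alwayssurjective} then gives that $g:X\to Y$ is surjective. One may assume $\pi_1(Y)$ is infinite, since the only compact contractible complex manifold is a point and the finite case thus forces $Y$ to be a point. Fix an ample $L$ on $\bar X$, let $V\subset Y$ be the open locus where $g$ is equidimensional of relative dimension $d:=\dim X-\dim Y$, and set $\tilde V:=q^{-1}(V)\subset\tilde Y$. Pick a general $y_0\in V$, let $F:=X_{y_0}$ with its reduced structure, fix an ample $H$ on $F$, and lift $y_0$ to $\tilde y_0\in\tilde V$. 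Since $g$ is proper, every fiber $\tilde g^{-1}(\tilde y)\subset\tilde X\subset\bar X$ for $\tilde y\in\tilde V$ is compact and is identified with $X_{q(\tilde y)}$ by the covering $\tilde X\to X$; by connectedness of $\tilde V$, the homology class $\alpha:=[\tilde g^{-1}(\tilde y)]\in H_{2d}(\bar X,\mathbb Z)$ is independent of $\tilde y$.

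Since $\tilde g$ is a proper holomorphic family of cycles in $\bar X$, one obtains a holomorphic injection $\Psi:\tilde V\to\chow_{\alpha}(\bar X)$, $\tilde y\mapsto[\tilde g^{-1}(\tilde y)]$, injectivity coming from the disjointness of distinct fibers of $\tilde g$. I claim that $\Psi$ factors through $\chow_{(F,H,C,\alpha)}(\bar X,L)$ for some uniform $C$: each embedding $\tau=\Psi(\tilde y)$ pulls $L$ back to a line bundle $\tau^*L$ on $F$ of fixed numerical class (determined by $\alpha$), so $\tau^*L$ varies in a translate of the compact group $\pic^0(F)$, and the intersection numbers $(L^i\cdot\tau^*H^j)_F=((\tau^*L)^i\cdot H^j)_F$ are therefore uniformly bounded. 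Let $\bar Y$ denote the Zariski closure of $\Psi(\tilde V)$ in $\chow_{(F,H,C,\alpha)}(\bar X,L)$; it is a projective algebraic variety, and the restriction of the universal family gives an algebraic proper morphism $\pi:\mathcal U\to\bar Y$ (equipped with an algebraic evaluation map to $\bar X$) that over the open $\Psi(\tilde V)\subset\bar Y$ is biholomorphic to $\tilde g|_{\tilde V}:\tilde X|_{\tilde V}\to\tilde V$.

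The key step is now the moduli-rigidity argument. For each $y\in V$ and each lift $\tilde y\in q^{-1}(y)$, the fiber $\pi^{-1}(\Psi(\tilde y))$ is isomorphic to $X_y$ via the covering map, so each isomorphism class of $g$-fiber over $V$ is realized by the infinite set $\Psi(q^{-1}(y))\subset\bar Y$. The loci in $\bar Y$ where the fiber of $\pi$ is reducible, has larger dimension, is singular, or has a non-generic Hilbert polynomial are algebraic subvarieties of $\bar Y$, so any non-generic fiber of $g$ over $V$ would force one of these algebraic loci to contain every $\Psi(q^{-1}(y))$ for $y$ in a nonempty open subset of $V$; combined with the algebraicity of the relative isomorphism scheme $\iso_{\bar Y\times\bar Y}(\pi_1^*\mathcal U,\pi_2^*\mathcal U)$ and a dimension count generalising the surface/curve case of (\ref{main.step.say}), this shows that every fiber of $g$ over $V$ is smooth, irreducible, of pure dimension $d$, and that all such fibers are mutually isomorphic. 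An analogous argument rules out $V\subsetneq Y$. With $g$ now a proper smooth morphism over $Y$ of constant moduli, the Fischer--Grauert theorem yields that $g$ is a holomorphic, hence algebraic, fiber bundle.

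The main obstacle is the moduli-rigidity step in the last paragraph. In the surface/curve special case of (\ref{main.step.say}) it is clean because $\bar Y$ is a curve and any non-constant algebraic map from a curve to a moduli space has finite fibers, which the infinitude of the orbits $\Psi(q^{-1}(y))$ immediately contradicts. In higher dimension one has to combine the disjointness of distinct orbits, the finite-component count of $\bar Y$, and the algebraicity of the various moduli- and isomorphism-related constructions to rule out positive-dimensional fibers of the moduli map; this is precisely where the semialgebraicity of $\tilde X\subset\bar X$ (encoded in the algebraicity of $\bar Y$) does its essential work.
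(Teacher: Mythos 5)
Your set-up runs parallel to the paper's for the first half: reduce to connected $\tilde X$, get surjectivity of $g$ from Theorem~\ref{alwayssurjective}, fix a general fiber $F$ with homology class $\alpha$, and pass to a Chow variety inside a projective compactification $\bar X$. But the heart of the argument is exactly the part you flag as ``the main obstacle,'' and the mechanism you gesture at there is not the one that works. Your proposed dimension-count/isomorphism-scheme argument (``any non-constant algebraic map from a curve to a moduli space has finite fibers'' and its higher-dimensional analogue) would require you to control the moduli map itself as an algebraic object; but as the paper notes there is no usable moduli space in higher dimensions, and the infinitude of the orbits $\Psi(q^{-1}(y))$ by itself does not force positive-dimensional fibers of a moduli map to be impossible — an algebraic family can very well have infinitely many isomorphic members parametrized by a positive-dimensional base. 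The thing that actually rules out degeneration is not a finiteness property of moduli but the topological Lemma~\ref{alg.maps.to.K.pi.1.lem}: a $\Gamma$-invariant closed \emph{analytic} subset $\tilde Z\subset\tilde X$ which is semialgebraic in $\bar X$ descends to a closed algebraic subset of $X$ and therefore, by Theorem~\ref{alwayssurjective}, must surject onto $Y$ unless it is empty. The paper applies this three times: once to show $\chow^*_{(F,\alpha)}(X)\to Y$ is surjective (hence the general fiber of $g$ has components $\cong F$), once to show the locus of non-$F$ fibers is empty, and once to show the locus where $\univ^*_{(F,\bar\alpha)}(\bar X)\to\bar X$ fails to be \'etale is empty. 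Your argument never isolates this lemma, and without it the contradiction you want simply does not materialize.

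Two further gaps. First, your claim that $\Psi$ factors through $\chow_{(F,H,C,\alpha)}(\bar X,L)$ is circular as stated: you call $\Psi(\tilde y)$ ``each embedding $\tau$'' and compute intersection numbers on $F$ via $\tau^*$, which presupposes that every equidimensional fiber of $g$ is already isomorphic to $F$ — precisely what needs proving. The paper avoids this by working in $\chow_{\bar\alpha}(\bar X)$ first, singling out $\chow^*_{(F,\bar\alpha)}(\bar X)$ as the union of components through preimages of a \emph{single} $F$, and then using the intersection-number invariance under deformation (inside $\tilde X$, not on an abstract $F$) to get the degree bound. Second, your final sentence ``an analogous argument rules out $V\subsetneq Y$'' hides the most delicate part: after establishing that $\univ_\alpha(X)\to\chow_\alpha(X)$ is a fiber bundle with fiber $F$ and $\univ_\alpha(X)\cong X$, one is left with a generically finite map $\chow_\alpha(X)\to Y$ whose $\Gamma$-cover is semialgebraic, and it is Theorem~\ref{genericallyfinitecase} (built on Theorem~\ref{homologymap}) that shows this map is an isomorphism. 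This is a separate, nontrivial topological input, not a reprise of the moduli argument, and it is what finally forces $V=Y$.
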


We follow the  arguments
in (\ref{main.step.say}) but there are some complications.

First, the map $\tilde g$ need not extend to $\bar X$; see (\ref{cubic.exmp}). 
We thus pass from $\bar X$ to $\chow_{\alpha}(\bar X)$
where $\alpha:=[\tilde F]$
 is the homology class of a general fiber of $\tilde g$.

Second, there are no sensible moduli spaces for higher dimensional
varieties in general, but  
$\chow_{(F,H,C,\alpha)}(\bar X,\bar L)$ acts as
 a fiber of the (nonexistent) moduli map.
Applying  (\ref{alg.maps.to.K.pi.1.lem})
to (roughly) the intersection 
 $\tilde X\cap \pi\bigl(\univ_{(F,H,C,\alpha)}(\bar X,\bar L)\bigr)$
 we conclude that 
$\pi: \univ_{(F,H,C,\alpha)}(\bar X,\bar L)\to \bar X$ is onto,
hence the  fibers of  $g:X\to Y$ are isomorphic to each other,
at least over a Zariski open set.

Finally, in order to deal with the singular fibers of $g$,
we prove that
one can factor 
$\tilde g: \tilde X\to \tilde Y'\to \tilde Y$
where $\tilde X\to \tilde Y'$ is a fiber bundle and
$\tilde Y'\to \tilde Y$ is generically finite.
Then we apply (\ref{homologymap}) to
$\tilde Y'/\Gamma\to Y$ to conclude that 
$ \tilde Y'\to \tilde Y$ is in fact an isomorphism.
\medskip

The next simple consequence of (\ref{alwayssurjective}) will be very useful.

\begin{lem}\label{alg.maps.to.K.pi.1.lem}
Notation and assumptions as in (\ref{alg.maps.to.K.pi.1.thm}).
Fix a normal, projective variety $\bar X$ that contains
$\tilde X$ as an open  semialgebraic subset.
Let $\tilde Z\subset\tilde X$ be a  nonempty closed,  $\Gamma$-invariant,
{\em  analytic} subset that is semialgebraic in $\bar X$.

Then $g: \tilde Z/\Gamma \to Y$ is surjective.
\end{lem}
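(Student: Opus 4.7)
The plan is to apply Theorem \ref{alwayssurjective} with the role of $X$ in that theorem played by $Y$ and the role of $A$ played by $\tilde Z/\Gamma$. This requires verifying three hypotheses.

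First, $Y$ is a smooth projective variety whose universal cover $\tilde Y$ is contractible, so $Y$ is itself a finite CW complex and a $K(\pi,1)$. Second, because $\tilde Z$ is closed and $\Gamma$-invariant in $\tilde X$, its image $A:=\tilde Z/\Gamma$ is a closed subset of the compact variety $X=\tilde X/\Gamma$, hence a compact metric space equipped with the natural map to $Y$. The $\Gamma$-cover of $A$ induced from $\tilde Y\to Y$ is the fiber product $A\times_Y\tilde Y = A\times_X\tilde X$, which is the preimage of $A$ under $\tilde X\to X$; since $\tilde Z$ is $\Gamma$-invariant this preimage equals $\tilde Z$. Thus in the notation of (\ref{basic.top.ques}) we have $\tilde A=\tilde Z$.

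Third, and this is the only nonformal step, we need $\tilde A=\tilde Z$ to be homotopy equivalent to a nonempty compact metric space. But $\tilde Z$ is a semialgebraic subset of the compact projective variety $\bar X$, and any semialgebraic set admits a finite semialgebraic triangulation (cf.\ \cite[Chap.9]{bcr}). Hence $\tilde Z$ is homotopy equivalent to a finite CW complex, which is nonempty by hypothesis and is in particular a compact metric space.

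With these ingredients assembled, Theorem \ref{alwayssurjective} immediately yields that $A\to Y$ is surjective, that is, $g\colon\tilde Z/\Gamma\to Y$ is surjective. I do not anticipate any substantive obstacle: the content of the lemma is essentially a bookkeeping translation that casts the earlier topological theorem into the form in which it will be applied during the proof of Theorem \ref{alg.maps.to.K.pi.1.thm}.
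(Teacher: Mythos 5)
Your proposal is correct and follows essentially the same route as the paper: both reduce the lemma to an application of Theorem~\ref{alwayssurjective} with $Y$ in the role of the base and $\tilde Z/\Gamma$ in the role of $A$. You spell out the (implicit in the paper) verifications that the pulled-back cover $\tilde A$ is precisely $\tilde Z$ and that $\tilde Z$, being semialgebraic in $\bar X$, has the homotopy type of a nonempty finite CW complex; the paper's only additional remark is the observation via Chow's theorem that $\tilde Z/\Gamma$ is an algebraic subvariety of $X$, which is not logically needed for the application of Theorem~\ref{alwayssurjective} but records that $\tilde Z/\Gamma$ fits into the complex-analytic framework used later.
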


Proof. Note that $\tilde Z/\Gamma$ is a closed analytic subset
of the projective variety $X$, hence algebraic by Chow's  theorem.
Thus (\ref{alwayssurjective}) applies to $g: \tilde Z/\Gamma \to Y$,
hence  $g: \tilde Z/\Gamma \to Y$ is surjective.\qed

\begin{say}[Proof of (\ref{alg.maps.to.K.pi.1.thm})]
\label{pf.alg.maps.to.K.pi.1.thm}
The connected components of $\tilde X$ are in one-to-one
correspondence with the cosets of
$\im[\pi_1(X)\to \pi_1(Y)]$ in $\pi_1(Y)$.
Since  $\tilde X$ is semialgebraic, there are only
finitely many cosets. Thus, by passing to a finite cover
of $Y$, we may assume that $\pi_1(X)\onto \pi_1(Y)$ is surjective
and hence $\tilde X$ is connected.
Note that $g$ is surjective by (\ref{alwayssurjective}). 

Let $F\subset X$ be an irreducible component of a general fiber of
$g$ and let $\alpha:=[F]\in H_*\bigl( X,\z\bigr)$ be the 
homology class of $ F$. 
Fix $C\gg 1$ and let $H$ be an ample line bundle on $X$;
we use $H$ to denote $H|_F$ as well.
Let
 $\chow^*_{\alpha}( X)$ (resp.\ $\chow^*_{(F,\alpha)}(X)$) denote
the unique irreducible component of
 $\chow_{\alpha}( X)$ (resp.\ $\chow_{(F,H,C,\alpha)}( X, H)$)
that contains the Chow point  $Ch(F)$.
(The notation $\chow^*_{(F,\alpha)}(X)$ indicates that
this is independent of $H$ and $C$.)
By (\ref{chow.defn}),  $\chow^*_{\alpha}( X)$ and
 $\chow^*_{(F,\alpha)}( X)$ are projective varieties and
there are universal families
$$
\begin{array}{ccc}
\univ^*_{\alpha}( X) & \stackrel{ \pi}{\to} &  X\\
 u\downarrow\hphantom{u} &&\hphantom{g}\downarrow g \\
\chow^*_{\alpha}( X) & \to  & Y
\end{array}
\qtq{and}
\begin{array}{ccc}
\univ^*_{(F,\alpha)}( X) & \stackrel{ \pi}{\to} &  X\\
 u\downarrow\hphantom{u} &&\hphantom{g}\downarrow g  \\
\chow^*_{(F,\alpha)}( X) & \to  & Y.
\end{array}
\eqno{(\ref{pf.alg.maps.to.K.pi.1.thm}.1)}
$$
Note that $\pi:\univ^*_{\alpha}( X)\to X$ is birational
and an isomorphism over $y\in Y$ if  $g^{-1}(y)$ is reduced
and of dimension $=\dim F$.
On the other hand, $\pi: \univ^*_{(F,\alpha)}( X) \to X$ is birational
$\Leftrightarrow$  it is surjective $\Leftrightarrow$ there is a Zariski dense
open set $Y^0\subset Y$ such that the irreducible components
of the fibers over $Y^0$ are all isomorphic to $F$. 


We could complete the diagrams  with $Y$ in the lower right corner
since every connected, effective cycle that is homologous to 
$F$ is contained in a  fiber of $g$.

Next we pull everything back to $\tilde Y$ to obtain
diagrams where all objects are analytic spaces and all
morphisms are proper.
$$
\begin{array}{ccc}
U_{\alpha}( \tilde  X) & \stackrel{ \tilde \pi}{\to} &  \tilde  X\\
 \tilde  u\downarrow\hphantom{u} &&\hphantom{g}\downarrow \tilde  g \\
C_{\alpha}( \tilde  X) & \to  &  \tilde Y
\end{array}
\qtq{and}
\begin{array}{ccc}
U_{(F,\alpha)}( \tilde  X) & \stackrel{ \tilde  \pi}{\to} &  \tilde  X\\
 \tilde  u\downarrow\hphantom{u} &&\hphantom{g}\downarrow \tilde  g  \\
C_{(F,\alpha)}( \tilde  X) & \to  &  \tilde Y.
\end{array}
\eqno{(\ref{pf.alg.maps.to.K.pi.1.thm}.2)}
$$
Fix a normal, projective variety $\bar X$ that contains
$\tilde X$ as an open  semialgebraic subset and
 a lifting $\tilde F$ of $F$.
Note that $\tilde F$ is an irreducible component of a 
general fiber of
$\tilde g$. Let $\bar\alpha:=[\tilde F]\in H_*\bigl(\bar X,\z\bigr)$ be the 
homology class of $\tilde F$.

Let $\chow^*_{\bar\alpha}(\bar X)$ denote the irreducible component 
of $\chow_{\bar\alpha}(\bar X)$
that contains  the Chow point  $Ch(\tilde F)$.

Let $\bar L$ be an ample line bundle on $\bar X$. 
Choose $C$ large enough and let 
$\chow^*_{(F,\bar\alpha)}(\bar X)$ 
be the union of {\em all}  irreducible components of
$\chow_{(F,H,C,\bar\alpha)}(\bar X,\bar L)$ 
that contain  the Chow point of  any preimage of $F$.
Since the irreducible components $\tilde F'_y$ of general fibers of
$\tilde g$ are deformations of each other, 
the intersection numbers
$\bigl(\tilde L^i\cdot \tilde H^j\cdot \tilde F'_y\bigr)$ are
independent of $\tilde F'_y$ for $i+j=\dim F$. 
Thus if the Chow point of 
{\em one} preimage of $F$ is in $\chow_{(F,H,C,\bar\alpha)}(\bar X,\bar L)$ 
then so is  the Chow point of {\em every} preimage of $F$. 

Thus $\chow^*_{(F,\bar\alpha)}(\bar X)$
 is an algebraic variety, independent of $C$,  and
there are universal families
$$
\begin{array}{ccc}
\univ^*_{\bar\alpha}(\bar X) & \stackrel{\bar \pi}{\to} & \bar X\\
\bar u\downarrow\hphantom{u} && \\
\chow^*_{\bar\alpha}(\bar X)
\end{array}
\qtq{and}
\begin{array}{ccc}
\univ^*_{(F,\bar\alpha)}(\bar X) & \stackrel{\bar \pi}{\to} & \bar X\\
\bar u\downarrow\hphantom{u} && \\
\chow^*_{(F,\bar\alpha)}(\bar X).
\end{array}
\eqno{(\ref{pf.alg.maps.to.K.pi.1.thm}.3)}
$$
(Note, however, that $\univ^*_{\bar\alpha}(\bar X) {\to} \bar X$
need not be birational and usually 
one can not complete the diagrams  with any $\bar Y$ in the lower right corner;
see Example \ref{cubic.exmp}.)
 
The key step of the proof is the following.

\medskip
{\it Claim  \ref{pf.alg.maps.to.K.pi.1.thm}.4.}
Notation and assumptions as above.
There are natural inclusions
$$
\begin{array}{ccc}
U_{\alpha}( \tilde  X) & \into &\univ^*_{\bar\alpha}(\bar X)\\
\tilde u\downarrow\hphantom{u} &&\bar u\downarrow\hphantom{u}  \\
C_{\alpha}( \tilde  X) & \into &\chow^*_{\bar\alpha}(\bar X)
\end{array}
\qtq{and}
\begin{array}{ccc}
U_{(F,\alpha)}( \tilde  X) & \into &\univ^*_{(F,\bar\alpha)}(\bar X) \\
\tilde u\downarrow\hphantom{u} && \bar u\downarrow\hphantom{u} \\
C_{(F,\alpha)}( \tilde  X) & \into & \chow^*_{(F,\bar\alpha)}(\bar X).
\end{array}
$$
Under these inclusions, the images of 
$U_{\alpha}( \tilde  X), C_{\alpha}( \tilde  X),
U_{(F,\alpha)}( \tilde  X), C_{(F,\alpha)}( \tilde  X)$
are open semialgebraic subsets.
\medskip

Proof. It is enough to prove that  the image of 
$U_{\alpha}( \tilde  X)$ is an open semialgebraic subset 
of $\univ^*_{\bar\alpha}(\bar X)$.

As we noted in (\ref{semialg.defns}), 
 $\pi^{-1}(\bar X\setminus \tilde X)\subset 
\univ^*_{\bar\alpha}(\bar X)$
and the complement of its projection  
$$
W_{\alpha}:=\chow^*_{\bar\alpha}(\bar X)\setminus 
u\bigl(\pi^{-1}(\bar X\setminus \tilde X)\bigr)\subset \chow^*_{\bar\alpha}(\bar X)
$$
 are  semialgebraic.
 Note that $W_{\alpha}$
parametrizes those  cycles $Z\subset \bar X$
that are contained in $\tilde X$ and satisfy $[Z]=\bar \alpha$.
Such a $Z$ is in $U_{\alpha}( \tilde  X)$ iff
its image in $Y$ is 0-dimensional. The latter condition is
invariant under deformations, hence
$U_{\alpha}( \tilde  X)$ is a connected component
of $W_{\alpha}$, hence semialgebraic. \qed
\medskip

We can thus apply (\ref{alg.maps.to.K.pi.1.lem}) to   
$C_{(F,\alpha)}( \tilde  X)  \to    \tilde Y$
 to conclude that the composite
$C_{(F,\alpha)}( \tilde  X)  \to    \tilde Y\to Y$
 is surjective and so is $\chow^*_{(F,\alpha)}( X)\to Y$. 
Thus there is a Zariski dense
open set $Y^0\subset Y$ such that the irreducible components
of the fibers over $Y^0$ are all isomorphic to $F$. 

Let $Z\subset \chow^*_{(F,\bar\alpha)}( \bar  X)$
be the Zariski closure of the complement of
$\chow^{\circ}_{(F,\bar\alpha)}( \bar  X)$.
Then $Z\cap \tilde X$ is a closed analytic subset,
invariant under $\Gamma$, and its image in $Y$
is contained in $Y\setminus Y^0$. Thus, by  (\ref{alg.maps.to.K.pi.1.lem}),
$Z\cap \tilde X$ is empty. Therefore
$U_{\alpha}( \tilde  X) \to 
C_{\alpha}( \tilde  X)$ and $  u:\univ_{\alpha}( X) \to 
\chow_{\alpha}(  X)$
are fiber bundles with fiber $F$.

The set of points where
$\univ^*_{(F,\bar\alpha)}(\bar X)\to \bar X$ is not \'etale is closed,
 its intersection with $\tilde X$ is a closed analytic subset,
invariant under $\Gamma$ and its image in $Y$
is contained in $Y\setminus Y^0$.
Using (\ref{alg.maps.to.K.pi.1.lem}) again
we see that 
 $U_{\alpha}( \tilde  X)\to \tilde X$ and $ \univ_{\alpha}( X) \to X$ 
are isomorphisms.
Thus $\tilde g$ factors as
$$
\tilde g: \tilde X=U_{\alpha}( \tilde  X)\stackrel{\tilde u}{\to} 
C_{\alpha}( \tilde  X) \to \tilde Y.
$$
Here $C_{\alpha}( \tilde  X)$ is semialgebraic and
$C_{\alpha}( \tilde  X)/\Gamma =\chow_{\alpha}(X)$
has a generically finite morphism to $Y$.
By (\ref{homologymap}) we conclude that
$\chow_{\alpha}(X)\to Y$ is an isomorphism. 

Thus $g:X\to Y$ is isomorphic to the fiber bundle 
$  u:\univ_{\alpha}( X) \to 
\chow_{\alpha}(  X)$. \qed

\end{say}

\begin{say}[Proof of Theorem \ref{main.thm.quot-version}] {\ }

If (\ref{main.thm.quot-version}.2) holds then
$\tilde X_{\Gamma}$ is a holomorphic fiber bundle over $\bd$ with fiber $F$.
By \cite{gra}, every fiber bundle over a contractible Stein space is trivial,
giving (\ref{main.thm.quot-version}.3). 

Every bounded symmetric domain is semialgebraic,
hence  (\ref{main.thm.quot-version}.3) implies
(\ref{main.thm.quot-version}.1).

Finally, assume (\ref{main.thm.quot-version}.1).
By Lemma \ref{bunded.thm} we have a holomorphic map
$X\to \bd/\Gamma$ and it is a fiber bundle by
 Theorem \ref{alg.maps.to.K.pi.1.thm}. \qed
\end{say}

\begin{exmp}\label{cubic.exmp}
Let ${\mathbb B}^2\subset \c^2$ denote the unit ball. Then
$\p^1\times {\mathbb B}^2$ is the universal cover of an
algebraic threefold. It can also be realized as an
open semialgebraic subset of a smooth cubic 3--fold
$\bar X_3\subset \p^4$ such that the
fibers of  $\p^1\times {\mathbb B}^2\to{\mathbb B}^2 $
become lines in $\bar X_3$. The family of all lines on  $\bar X_3$
is irreducible and there are 6 lines through a general point.
Thus the projection $\p^1\times {\mathbb B}^2\to{\mathbb B}^2 $
will not extend to $\bar X_3$ in any way.
\end{exmp}

\section{Open problems}\label{sec.conjectures}

Here we discuss a possible approach to  
 Conjecture  \ref{semi.main.conj} and several other questions that, directly or
indirectly, relate to it.
We also discuss which arguments in \cite{chk} generalize and which need
further new ideas.

The first part of the proof in \cite{chk} shows that if
$\tilde X$ is quasi-projective then
$\pi_1(X)$ has a finite index abelian subgroup.
One can follow the proof given in  \cite{chk}
 until the point where we need to
exclude the case when $X$ is of general type. If
$\tilde X$ is quasi-projective, this is done using 
Kobayashi-Ochiai  \cite{kob-och}. 

In the semialgebraic case there is no contradiction.
In fact, if ${\mathbb U}$ is a bounded open subset of a
Stein manifold and $\Gamma$ is a group acting properly discontinuously
and freely with compact quotient, then ${\mathbb U}/\Gamma$
is a projective variety with ample canonical class.
(This is essentially due to Poincar\'e; see \cite[Chap.5]{shaf-book}
for details.) 
While this sounds very general, there are  few
such examples known aside from bounded symmetric domains \cite{sab}.
The results of \cite{vey, won, fra1} show that under various additional
restrictions, such a ${\mathbb U}$ is necessarily a
 bounded symmetric domain;
see \cite{isa-kra} for a survey of closely related results. 
From our point of view, this leads to
the following problem.

\begin{ques}\label{bdd.dom.quot.then.sum}
  Let ${\mathbb U}$ be a semialgebraic bounded open subset of an
affine variety. Assume that  there is a group $\Gamma$ acting properly 
discontinuously
with compact quotient. Is ${\mathbb U}$ necessarily a
 bounded symmetric domain?
\end{ques}

In general one can hope that
(\ref{bdd.dom.quot.then.sum}), together with the conjectures
\cite[18.6--8]{shaf-book}, imply that if
 $X$ is a smooth projective variety  whose
 universal cover  is  biholomorphic to a  
semialgebraic subset of a projective variety
then $\pi_1(X)$ is commensurate with an extension of 
an abelian group with a 
cocompact lattice acting on a  bounded symmetric domain.

The second part of the proof in \cite{chk} deals with the case
when  $\pi_1(X)$ is a free  abelian group $\z^{2r}$. Then
the Albanese map
$\alb_X:X\to \Alb(X)\cong \c^r/\z^{2r}$ is the natural candidate
 for the fiber bundle structure
in (\ref{main.thm.quot-version}.2).
In our case, we  construct the non-abelian
Albanese map by
referring to  the works of \cite{eel-sam}  and \cite{siu}.
We thank D.~Toledo for explaining to us that the case when
 $\bd$ is a reducible,
 bounded, symmetric domain of dimension $\geq 2$ and
$\Gamma$ is a cocompact, irreducible lattice acting on
 $\bd$ can be handled using the results of \cite{mok-harmonic}.

Thus we obtain an algebraic morphism  $g:X\to \bd/\Gamma$.
The relative version of the Albanese morphism
should deal with the (almost abelian) kernel of
$\pi_1(X)\to \Gamma$.

The third step is  to prove that the  morphism  $g:X\to \bd/\Gamma$
is a fiber bundle. The proof in \cite{chk} relied on a detailed
knowledge of subvarieties of Abelian varieties and their
finite ramified covers. This is replaced by the
topological arguments of  Section \ref{top.sect}.

While Theorems \ref{homologymap} and \ref{alg.maps.to.K.pi.1.thm}
 are more general than needed for our purposes,
all the examples  suggest that
even stronger results may be true.

\begin{ques}\label{last.q} Let $Y$ be a smooth  projective variety and 
 $g:Y\to X$ a (not necessarily surjective) 
morphism  to a  (not necessarily smooth)  projective variety.
Assume that  the universal cover $\tilde X$ is contractible and that
the fiber product $\tilde Y:=Y\times_X\tilde X$ is homotopic to a
finite CW complex.

Is then $Y$ a differentiable fiber bundle over $X$?
\end{ques}

The example of Kodaira fibrations (see, for instance, \cite[Sec.V.14]{bpv})
 shows that  $g:Y\to X$
need not be a holomorphic fiber bundle.

The above question leads naturally to several interesting problems
concerning the topology of algebraic maps; these are
 discussed in
\cite{bob-kol}.

\begin{ack} We thank B.~Claudon, J.~Fern{\'a}ndez~de~Bobadilla, 
A.~Gabrielov, A.~H\"oring, 
A.~Huckleberry, S.~Krantz, 
C.~Miebach, A.~N\'emethi, D.~Toledo
and  J.~Wahl for useful comments, references and suggestions.
Partial financial support   to JK was provided by  the NSF under grant number 
DMS-0758275.\end{ack}

\def\cprime{$'$} \def\dbar{\leavevmode\hbox to 0pt{\hskip.2ex \accent"16\hss}d}
\providecommand{\bysame}{\leavevmode\hbox to3em{\hrulefill}\thinspace}
\providecommand{\MR}{\relax\ifhmode\unskip\space\fi MR }
\providecommand{\MRhref}[2]{%
  \href{http://www.ams.org/mathscinet-getitem?mr=#1}{#2}
}
\providecommand{\href}[2]{#2}

\bigskip

\noindent Princeton University, Princeton, NJ 08544-1000, USA

\begin{verbatim}kollar@math.princeton.edu\end{verbatim}

\noindent Current address for JP:

\noindent Stanford University, Stanford, CA 94305, USA

\begin{verbatim}pardon@math.stanford.edu\end{verbatim}

\end{document}